\newtheorem{theorem}{Theorem}[section]
\newtheorem{lemma}[theorem]{Lemma}
\newtheorem{problem}[theorem]{Problem}
\newtheorem{corollary}[theorem]{Corollary}
\newtheorem{proposition}[theorem]{Proposition}
\newtheorem{conjecture}[theorem]{Conjecture}
\newtheorem{construction}[theorem]{Construction}
\def\K{\mathcal{K}}
\def\d{\delta}
\def\eps{\varepsilon}
\def\deg{{\rm deg}}
\def\a{\alpha}
\def\b{\beta}
\def\g{\gamma}
\def\P{\mathcal{P}}
\def\ie{{\em i.e.}}
\def\eg{{\em e.g.}}
\def\ex{\mbox{\rm ex}}
\def\PM{\textbf{PM}}
\def\iP{\mathbf{i}_{\mathcal{P}}}
\begin{document}

\title{Recent advances on Dirac-type problems for hypergraphs}
\thanks{
The author is partially supported by NSF grant DMS-1400073.}
\author{Yi Zhao}
\address{Department of Mathematics and Statistics, Georgia State University, Atlanta, GA 30303}
\email{yzhao6@gsu.edu}

\subjclass{05C65, 05C70, 05C45, 05C35}%
\keywords{Perfect matchings, hypergraphs, Hamilton cycles, graph packing, absorbing method}%

\begin{abstract}
A fundamental question in graph theory is to establish conditions that ensure a graph contains certain spanning subgraphs. Two well-known examples are Tutte's theorem on perfect matchings and Dirac's theorem on Hamilton cycles. Generalizations of Dirac's theorem, and related matching and packing problems for hypergraphs, have received much attention in recent years. New tools such as the absorbing method and regularity method have helped produce many new results, and yet some fundamental problems in the area remain unsolved. We survey recent developments on Dirac-type problems along with the methods involved, and highlight some open problems.
\end{abstract}

\maketitle

Given two (hyper)graphs $F$ and $H$, which conditions guarantee $H$ contains $F$ as a subgraph?
When $|V(F)|= |V(H)|$, the decision problem of whether $H$ contains $F$ is often NP-complete, \eg, deciding if a graph $H$ contains a Hamilton cycle is a well-known NP-complete problem. Therefore it is natural to look for sufficient conditions for such problems. A classical result of Dirac \cite{Dirac} states that every graph on $n\ge 3$ vertices with minimum degree $n/2$ contains a Hamilton cycle. Problems that relate the minimum degree (in general, minimum $d$-degree in hypergraphs) to the structure of the (hyper)graphs are often referred to as \emph{Dirac-type problems}. The Dirac-type problems for hypergraphs have received much attention in recent years.
In this survey we concentrate on three such problems: matching problems (Section~1), packing problems (Section~2), and Hamilton cycles (Section~3). Many problems in this survey were already considered in the survey of R\"odl and Ruci\'nski \cite{RR-survey}. However, since this is a fast-growing area, there are new developments in the last few years and we will emphasize these new advances.
Since we only consider Dirac-type problems, we do not discuss matching, packing, or Hamilton cycles in random or quasi-random hypergraphs. We also omit corresponding results in graphs and digraphs. Many results that we omit can be found in other surveys, \eg, K\"uhn and Osthus \cite{KuOs-survey, KOsurvey12, KuOs14ICM}, and Gould \cite{Gou03, Gou14}.

\section{Matching problems}
Given $k\ge 2$, a \emph{$k$-uniform hypergraph} (\emph{$k$-graph}) consists of a vertex set $V$ and an edge set $E$, where each edge is a $k$-element subset ($k$-subset) of $V$. Thus a $2$-graph is simply a graph. In this survey a \emph{hypergraph} refers to a $k$-graph with $k\ge 3$. Given a $k$-graph $H$ with $k\ge 2$, a \emph{matching of size $s$} is a collection of $s$ disjoint edges; a \emph{perfect matching} is a matching that covers the vertex set of $H$ (thus it is necessary that $k$ divides $|V(H)|$). Many open problems in combinatorics can be formulated as a problem of finding perfect matchings in hypergraphs, \eg, Ryser's conjecture that every Latin square of odd order has a transversal, and the existence of combinatorial designs (recently solved by Keevash \cite{Ke-design}).

A well-known result of Tutte \cite{Tu47} characterized all the graphs with perfect matchings and there are efficient algorithms (e.g., Edmond's algorithm \cite{Edmonds}) that determine if a graph has a perfect matching. However, deciding if a 3-partite 3-graph contains a perfect matching is among the first 21 NP-complete problems given by Karp~\cite{Karp}. Therefore it is natural to look for sufficient conditions that guarantee a perfect matching.

\subsection{Degree Conditions for Perfect Matchings}
\label{ss:1}
There are multiple ways to define degrees in hypergraphs. Given a $k$-graph $H$ with a set $S$ of $d$ vertices, where $0 \leq d \leq k-1$, the \emph{degree} of $S$, denoted by $\deg_H(S)$ or simply $\deg(S)$, is the number of edges containing $S$. The \emph{minimum $d$-degree $\delta _{d}
(H)$} of $H$ is the minimum of $\deg(S)$ over all $d$-subsets $S$ of $V(H)$. Hence $\delta_0(H)= e(H)$ is the number of edges in $H$.
We refer to  $\delta _1 (H)$ as the \emph{minimum vertex degree} of $H$ and  $\delta _{k-1} (H)$ as the \emph{minimum codegree} of $H$. The simple monotonicity
\[
\frac{\d_0(H)}{\binom{n}{k}} \ge \frac{\d_1(H)}{\binom{n-1}{k-1}} \ge \cdots \ge \frac{\d_{k-1}(H)}{n-k+1}
\]
suggests that a codegree condition is stronger than other degree conditions.
Bollob\'as, Daykin and Erd\H{o}s \cite{BDE} first related the minimum (vertex) degree to the existence of a large (but far from perfect) matching in $k$-graphs. Daykin and H\"aggkvist \cite{DaHa} extended this result
by showing that every $k$-graph with $\d_1(H)\ge (1 - 1/k)\binom{n-1}{k-1}$ contains a perfect matching.

Given integers $d<k\le n$ such that $k$ divides $n$, define the \emph{minimum $d$-degree threshold} $\mathbf{m_{d}(k, n)}$ as the smallest integer $m$ such that every $k$-graph $H$  on $n$ vertices with $\delta_d(H)\ge m$ contains a prefect matching.
A simple greedy argument shows that $m_1(2, n)=n/2$ for all $n\in 2\mathbb{N}$. Given $k\ge 3$, a result of R\"odl, Ruci\'nski and Szemer\'edi \cite{RRS06} on Hamilton cycles implies that $m_{k-1}(k, n)\le n/2 + o(n)$. 
K\"uhn and Osthus \cite{KO06mat} sharpened this bound to $m_{k-1}(k, n)\le n/2 + 3k^2 \sqrt{n\log n}$ by reducing the problem to the one for $k$-partite $k$-graphs. R\"odl, Ruci\'nski and Szemer\'edi \cite{RRS06mat} improved it further to $m_{k-1}(k, n)\le n/2 + O(\log n)$ by using the absorbing method.  R\"odl, Ruci\'nski, Schacht, and Szemer\'edi \cite{RRSS08} found a simple proof of $m_{k-1}(k, n)\le n/2 + k/4$.
Finally R\"odl, Ruci\'nski and Szemer\'edi \cite{RRS09} determined $m_{k-1}(k, n)$ exactly for all $k\ge 3$ and sufficiently large $n$ (again by the absorbing method).
In order to state this and later results, let us describe a class of extremal configurations that are usually referred to as \emph{divisibility barriers}.
\begin{construction}\cite{TrZh12}
\label{cons:md}
Define $\mathcal{H}_{\rm ext} (n,k)$ to be the family of all $k$-graphs $H=(V, E)$, in which there is a partition of $V$ into two parts $A, B$ and  $i\in \{0, 1\}$ such that $|A|\ne i |V| /k \mod 2$ and $|e\cap A| = i \mod 2$ for all edges $e\in E$.
\end{construction}
It is easy to see that no hypergraph $H\in \mathcal{H}_{\text{ext}} (n,k)$ contains a perfect matching. Indeed, suppose $H$ contains a perfect matching $M$, then $|A|= \sum_{e\in M} |e\cap A| = i |V|/ k \mod 2$, contradicting the definition of $H$.

Define $\delta (n,k, d)$ to be the maximum of the minimum $d$-degrees among all the hypergraphs in  $\mathcal H_{\text{ext}} (n,k)$ and note that $m_{d}(k, n)> \delta(n, k, d)$. It is easy to see that
\begin{equation} \label{eq:nk1}
\delta(n, k, k-1) = \left\{\begin{array}{ll}
{n}/{2} - k + 2 & \text{if $k/2$ is even and $n/k$ is odd}\\
{n}/{2} - k + {3}/{2} & \text{if $k$ is odd and $(n-1)/{2}$ is odd}\\
{n}/{2} - k + {1}/{2} & \text{if $k$ is odd and $(n-1)/{2}$ is even} \\
{n}/{2} - k + 1 & \text{otherwise.}
\end{array} \right.
\end{equation}
\begin{theorem}\cite{RRS09}
\label{thm:RRS}
For $k\ge 3$, $m_{k-1}(k, n)=\delta(n, k, k-1) + 1$ for sufficiently large $n$.
\end{theorem}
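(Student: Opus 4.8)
The plan is to prove the two bounds $m_{k-1}(k,n)>\delta(n,k,k-1)$ and $m_{k-1}(k,n)\le\delta(n,k,k-1)+1$ separately. The first was essentially noted already: a member of $\mathcal H_{\text{ext}}(n,k)$ attaining the maximum in the definition of $\delta(n,k,k-1)$ has minimum codegree exactly $\delta(n,k,k-1)$ and no perfect matching. So the content is the upper bound: for $n$ large with $k\mid n$, every $k$-graph $H$ on $n$ vertices with $\delta_{k-1}(H)\ge\delta(n,k,k-1)+1$ has a perfect matching. Fix a small $\eps>0$ and split into two cases according to whether $H$ is \emph{$\eps$-extremal}, meaning there is a partition $V(H)=A\cup B$ and $i\in\{0,1\}$ with $|A|\not\equiv i\,|V(H)|/k\pmod 2$ such that all but at most $\eps\binom{n}{k}$ edges $e$ of $H$ satisfy $|e\cap A|\equiv i\pmod 2$; otherwise call $H$ \emph{non-extremal}.

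For the non-extremal case I would run the absorbing method. First, an absorbing lemma: there is a matching $M_0$ in $H$ covering at most $\eps' n$ vertices, for a suitable $\eps'=\eps'(\eps,k)>0$, with the property that every $U\subseteq V(H)\setminus V(M_0)$ with $k\mid|U|$ and $|U|\le\eps' n$ satisfies that $H[V(M_0)\cup U]$ has a perfect matching. This is proved in the usual way: for each $k$-set $S$ one counts the ``$S$-absorbers'' --- bounded-size vertex sets $T$ disjoint from $S$ with $k\mid|T|$ for which both $H[T]$ and $H[S\cup T]$ have perfect matchings --- and shows there are $\Omega(n^{|T|})$ of them, so that a random subfamily of absorbers, pruned of overlaps, yields $M_0$. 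Here both the codegree hypothesis and, crucially, the fact that $H$ is far from every divisibility barrier are used, the latter to guarantee absorbers of the correct parity type. Next one wants a matching $M_1$ in $H-V(M_0)$ missing only $o(n)$ vertices: since $\delta_{k-1}(H)$ is within an additive constant of $n/2$, comfortably above the codegree threshold (of order $n/k$) for a perfect \emph{fractional} matching, $H-V(M_0)$ has one, and a standard rounding argument produces such an $M_1$. Its set of uncovered vertices $U$ automatically satisfies $k\mid|U|$, and for $n$ large $|U|<\eps' n$, so the absorbing property of $M_0$ completes a perfect matching of $H$. Note that this case does not use the full strength of the codegree hypothesis.

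For the $\eps$-extremal case the absorbing lemma can fail, and one argues more directly --- this is where the precise value of $\delta(n,k,k-1)$ enters. Given a partition $A\cup B$ and parameter $i$ as above, the near-optimal codegree first forces $|A|$ and $|B|$ to be as balanced as the parity conditions in~\eqref{eq:nk1} permit; and since $\delta_{k-1}(H)\ge\delta(n,k,k-1)+1$ is larger than the minimum codegree of the complete divisibility barrier on $(A,B)$, which lies in $\mathcal H_{\text{ext}}(n,k)$, the hypergraph $H$ cannot be a subgraph of that barrier, so it contains ``parity-breaking'' edges $e$ with $|e\cap A|\not\equiv i\pmod 2$, and the codegree excess forces enough of them, spread out enough, to repair the divisibility obstruction. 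One then assembles a perfect matching by hand: an odd number of parity-breaking edges, together with a bounded number of crossing edges, is used to correct the residues of $|A|$ and $|B|$ modulo $2$ and modulo $k$, after which the matching is completed separately inside $A$ and inside $B$, using that the induced codegrees there lie safely above the $|A|/2$ and $|B|/2$ thresholds. Working this through the four regimes of~\eqref{eq:nk1} is exactly where the constants $-k+1$, $-k+3/2$, $-k+1/2$, $-k+2$ get pinned down.

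I expect the $\eps$-extremal analysis to be the main obstacle. The absorbing lemma is technical but conceptually standard, and the near-perfect matching step in the non-extremal case is soft. The real work is showing that a single extra unit of minimum codegree above each of the four barriers always suffices to complete a perfect matching, which requires a delicate, essentially case-by-case argument, uniform over $k\ge 3$, with no slack in the constants.
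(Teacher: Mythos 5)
Your proposal follows the same strategy that the survey attributes to \cite{RRS09} and to the later exact results in this area: prove the lower bound from $\mathcal H_{\rm ext}(n,k)$, then split the upper bound into an $\eps$-extremal and a non-extremal case, run a refined absorbing lemma in the non-extremal case (refined because the standard Lemma~\ref{lem:HPS} needs $\delta_{k-1}(H)\ge(\tfrac12+2\gamma)(n-k+1)>n/2$, whereas here $\delta_{k-1}(H)$ may sit slightly below $n/2$, so non-extremality must be invoked to supply absorbers of both parities), and finish the $\eps$-extremal case by a direct parity-repair argument that pins down the four constants in~\eqref{eq:nk1}. One small simplification you are missing: in the codegree setting $d=k-1$ the near-perfect matching step does not need a fractional matching plus rounding --- Proposition~\ref{prop:RRS} already yields a near-perfect matching from $\delta_{k-1}(H)\ge\lfloor n/k\rfloor-k+2$ by a short greedy exchange, and this is the route \cite{RRS09} actually take; the fractional detour only becomes necessary for smaller $d$. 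Otherwise your outline is the correct one.
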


With more case analysis, one may determine $\delta (n,k, k-2)$, \eg, it is shown in \cite{TrZh12} that
\[
\delta(n, 4, 2) \le \frac{n^2}{4}-\frac{5n}{4} - \frac{\sqrt{n-3}}{2}+\frac{3}{2}
\]
and equality holds for infinitely many $n$. In general, $\delta (n,k, d)= (1/2+o(1))\binom{n- d}{k- d}$ for any fixed $k>d$ but the general formula of $\delta (n,k, d)$ is unknown -- this is related to the open problem of finding the minima of binary Krawtchouk polynomials. Nevertheless, Treglown and the author \cite{TrZh12, TrZh13} determined $m_{d}(k, n)$ in terms of $\delta(n, k, d)$ for all $d\ge k/2$. 
\begin{theorem} \cite{TrZh12, TrZh13}
\label{thm:TZ12}
Let $k\ge 3$ and $d \geq k/2$. Then $m_{d}(k, n)=\delta(n, k, d) + 1$ for sufficiently large $n$.
\end{theorem}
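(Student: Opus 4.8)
The lower bound $m_d(k,n)\ge\delta(n,k,d)+1$ was already observed below Construction~\ref{cons:md}, so the task is the matching upper bound: for $k\ge 3$, $d\ge k/2$, $k\mid n$, and $n$ large, every $k$-graph $H$ on $n$ vertices with $\delta_d(H)\ge\delta(n,k,d)+1$ has a perfect matching. (For $d=k-1$ this recovers Theorem~\ref{thm:RRS}.) The plan is the now-standard \emph{absorbing--stability} dichotomy. Fix sufficiently small constants $\gamma\ll\eps\ll 1/k$ and call $H$ \emph{$\eps$-extremal} if there is a partition $V(H)=A\cup B$ and $i\in\{0,1\}$ with $|A|$ within $\eps n$ of the size prescribed in Construction~\ref{cons:md} such that all but at most $\eps\binom nk$ edges $e$ satisfy $|e\cap A|\equiv i\pmod 2$. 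Since $\delta(n,k,d)<\tfrac12\binom{n-d}{k-d}$ while $\delta(n,k,d)+1\ge(\tfrac12-\gamma)\binom{n-d}{k-d}$ for $n$ large, it suffices to prove two statements, which together cover every $H$ meeting the hypothesis: \textbf{(a)} if $H$ is not $\eps$-extremal and $\delta_d(H)\ge(\tfrac12-\gamma)\binom{n-d}{k-d}$, then $H$ has a perfect matching; \textbf{(b)} if $H$ is $\eps$-extremal and $\delta_d(H)\ge\delta(n,k,d)+1$, then $H$ has a perfect matching.

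For \textbf{(a)} I would run the absorbing method. The core is an \emph{absorbing lemma}: there is a matching $M_0$ in $H$ with $|V(M_0)|\le\gamma n$ such that $H[V(M_0)\cup W]$ has a perfect matching for every $W\subseteq V(H)\setminus V(M_0)$ with $k\mid|W|$ and $|W|\le\gamma^2 n$. To prove it one shows that every $k$-set $T\subseteq V(H)$ has at least $cn^{\,t}$ \emph{absorbers} — $t$-sets $S$ disjoint from $T$ for which both $H[S]$ and $H[S\cup T]$ have perfect matchings, with $t=t(k)$ fixed and $c=c(\eps,k)>0$ — and this is exactly where non-extremality is used: in a divisibility barrier a $k$-set $T$ with $|T\cap A|$ of the wrong parity has \emph{no} absorber, by the same parity computation that shows such a hypergraph has no perfect matching. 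A random sparse sub-collection of absorbers, pruned via Markov's and Chernoff's inequalities, yields $M_0$ retaining $\Omega(n)$ absorbers for every $T$, whence a greedy argument lets $M_0$ absorb any admissible $W$. Setting $M_0$ aside, $H':=H-V(M_0)$ still has minimum $d$-degree $(\tfrac12-o(1))\binom{|V(H')|-d}{k-d}$, which lies above the threshold for an almost perfect matching — pass to a near-perfect \emph{fractional} matching (it exists because $(\tfrac12-o(1))\binom{n-d}{k-d}$ exceeds $[1-(1-1/k)^{k-d}]\binom{n-d}{k-d}$ when $d\ge k/2$) and convert it to an integral matching $M'$ covering all but at most $\gamma^2 n$ vertices. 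Then $W:=V(H)\setminus(V(M_0)\cup V(M'))$ has $k\mid|W|$ and $|W|\le\gamma^2 n$, so $M_0$ absorbs $W$, and together with $M'$ this gives a perfect matching of $H$.

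For \textbf{(b)} one argues directly from the near-extremal structure: a partition $V(H)=A\cup B$ with $|A|\approx i\cdot n/k$ and almost every edge meeting $A$ in $i\pmod 2$ vertices. Here the strict inequality $\delta_d(H)>\delta(n,k,d)$ is essential — because $H$ has strictly larger minimum $d$-degree than every genuine member of $\mathcal{H}_{\text{ext}}(n,k)$, either its part sizes already satisfy the balance required for a perfect matching, or $H$ contains a bounded number of ``parity-breaking'' edges that can be used to correct $|A|$ and $|B|$ to the needed residues; one then completes greedily inside $A$ and inside $B$ using the degree condition, the bookkeeping splitting along the four cases of \eqref{eq:nk1} and the two choices of $i$. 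The hypothesis $d\ge k/2$ is what makes the dichotomy exhaustive: for $d\ge k/2$ the competing \emph{space barrier} (all $k$-sets meeting a fixed set of size $n/k-1$) has minimum $d$-degree only $\bigl(1-(1-1/k)^{k-d}+o(1)\bigr)\binom{n-d}{k-d}<\tfrac12\binom{n-d}{k-d}$, hence never attains the hypothesised degree, so divisibility barriers are the only near-extremal obstruction and $\mathcal{H}_{\text{ext}}(n,k)$ captures them all.

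I expect the extremal case \textbf{(b)} to be the main obstacle. The argument in \textbf{(a)} follows a familiar template, whereas \textbf{(b)} is a bespoke, case-heavy construction in which each residue condition of \eqref{eq:nk1} must be met \emph{exactly}, the parity-correcting edges must be located and kept disjoint from the greedily matched portion, and the degenerate sub-cases (where $A$ or $B$ is very small, or where $H$ is simultaneously close to two different barriers) must be dispatched individually. A secondary difficulty, internal to \textbf{(a)}, is the supersaturation step showing that non-extremality forces $\Omega(n^{t})$ absorbers for \emph{every} $k$-set: this needs a local analysis of the link of $T$ ruling out a bipartite-type obstruction, and is where most of the case work in the non-extremal regime resides. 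One could instead replace the ad hoc absorbing lemma by a lattice-based absorption argument, which streamlines \textbf{(a)} but leaves the separate analysis of \textbf{(b)} untouched.
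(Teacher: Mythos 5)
Your overall architecture — lower bound from Construction~\ref{cons:md}, then an extremal/non-extremal dichotomy in which the non-extremal case is dispatched by absorbing plus an almost-perfect-matching lemma and the extremal case is handled by a bespoke structural argument — is exactly the scheme the paper attributes to \cite{TrZh12,TrZh13} (see the discussion following Lemma~\ref{lem:HPS}, and the citations of Theorem~\ref{thm:TZ3} and of {[}Theorem~4.1{]}\cite{TrZh12}). Your identification of where non-extremality enters the absorbing lemma (a $k$-set of the wrong parity relative to a divisibility barrier has no absorbers) and your explanation of why $d\ge k/2$ makes the divisibility barrier the unique near-extremal obstruction are both correct and match the paper's account.

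The one step you have not genuinely justified is the almost-perfect-matching step inside case (a). You assert that a near-perfect fractional matching exists because $(\tfrac12-o(1))\binom{n-d}{k-d}$ exceeds the space-barrier value $[1-(1-1/k)^{k-d}]\binom{n-d}{k-d}$, but this presupposes that the fractional matching threshold $f_d(k,n)$ equals the space-barrier value, which is precisely Conjecture~\ref{conj:fpm}(i) and is only proved for $k-d\le 4$. Since $d\ge k/2$ allows $k-d$ to be arbitrarily large, your argument as written leans on an open conjecture. The gap is repairable: one can instead cite the result of K\"uhn, Osthus, and Townsend on $m_d^s(k,n)$, which for $d\ge k/2$ covers $s$ up to $(n-o(n))/k$ and delivers an integer matching covering all but $o(n)$ vertices whenever $\delta_d\ge(\tfrac12-\gamma)\binom{n-d}{k-d}$, with no detour through fractional matchings; alternatively, as the paper indicates, one can exploit non-extremality in the almost-perfect-matching lemma itself (the paper explicitly says non-extremality "may be assumed" in \emph{both} the absorbing and almost-perfect-matching lemmas), which is closer to what \cite{TrZh12} actually does since it predates \cite{KOTo}. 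Your sketch routes through fractional matchings without using non-extremality, so be explicit that this particular sub-claim needs one of those replacements. The remainder of the proposal, including your assessment that the case-heavy extremal analysis (b) is where the real work lies, is sound.
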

Previously Pikhurko~\cite{Pik} showed that $m_d(k, n)= (1/2+ o(1))\binom{n-d}{k-d}$ for all $d \geq k/2$.
Independently Czygrinow and Kamat \cite{CzKa} determined $m_2(4, n)$ for sufficiently large $n$.

Another class of extremal constructions are known as \emph{space barriers}.
\begin{construction}\label{cons:ms}
Given $s, k, n\in \mathbb{N}$ such that $s\le \lceil n/k \rceil$ ($k$ may not divide $n$),
let $H^0_s(n,k)$ be the $k$-graph on $n$ vertices whose vertex set is partitioned into two parts $A$ and $B$ such that $|A|= s-1$, and whose edge set consists of all those edges with at least one vertex in $A$. 
When $k$ divides $n$, let $H^0(n, k) := H^0_{n/k}(n, k)$.
\end{construction}
Since each edge contains at least one vertex from $A$ and $|A|< s$, $H^0_s(n,k)$ contains no matching of size $s$; in particular, $H^0(n,k)$ contains no prefect matching. Note that
\[
\delta _{d} (H^0(n,k)) =  \binom{n-d}{k-d}-\binom{(1-1/k)n -d +1}{k-d}\approx \left(1-\left(\frac{k-1}{k}\right) ^{k-d} \right ) \binom{n-d}{k-d}.
\]
H\`an, Person and Schacht~\cite{HPS} proved that $m_1(3, n) = (5/9 + o(1))n\approx  \delta _{1} (H^0(n,3))$. Khan~\cite{khan1} and independently K\"uhn, Osthus and Treglown~\cite{KOT} obtained that $m_1(3, n) = \delta _{1} (H^0(n,3)) +1$ for sufficiently large $n$. Khan~\cite{khan2} also proved that $m_1(4, n) = \delta _{1} (H^0(n,4)) +1$ for sufficiently large $n$. Alon, Frankl, Huang, R\"odl, Ruci\'nski and Sudakov~\cite{AFHRRS}  determined $m_{d}(k, n)$ asymptotically for all $d\ge k-4$, including the new cases when $(k, d)= (5, 1)$, $(5, 2)$, $(6, 2)$, and  $(7, 3)$. 
Very recently Treglown and the author \cite{TrZh15} determined $m_2(5, n)$ and $m_3(7, n)$ exactly for sufficiently large $n$.

All these results point to the following conjecture, whose  asymptotic version \eqref{eq:md} has appeared earlier, \eg, \cite{HPS,KuOs-survey}.

\begin{conjecture}\cite{TrZh15}
\label{generalconj}
Let $k, d \in \mathbb N$ such that $d \leq k-1$. Then for sufficiently large $n\in k\mathbb{N}$,
\[
m_{d}(k, n)= \max \left \{ \delta (n,k,d), \ \binom{n-d}{k-d}-\binom{(1-1/k)n -d +1}{k-d}\right \} + 1.
\]
In particular,
\begin{equation}\label{eq:md}
m_{d}(k, n)= \left(\max \left \{ \frac12, \ 1-\left(\frac{k-1}{k}\right)^{k-d} \right\} + o(1) \right) \binom{n-d}{k-d}.
\end{equation}
\end{conjecture}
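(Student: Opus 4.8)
Since Conjecture~\ref{generalconj} is open in general, what follows is a program rather than a proof; it is essentially the route that established the known cases ($d\ge k/2$ in Theorem~\ref{thm:TZ12}, and $d\ge k-4$ asymptotically). The lower bound is immediate: no member of $\mathcal{H}_{\rm ext}(n,k)$ and no copy of $H^0(n,k)$ has a perfect matching, and their minimum $d$-degrees are $\delta(n,k,d)$ and $\binom{n-d}{k-d}-\binom{(1-1/k)n-d+1}{k-d}$ respectively. So the whole content is the upper bound: writing $c_{k,d}=\max\{1/2,\,1-((k-1)/k)^{k-d}\}$, every $n$-vertex $k$-graph $H$ with $k\mid n$ and
\[
\delta_d(H)\ge \max\left\{\delta(n,k,d),\ \binom{n-d}{k-d}-\binom{(1-1/k)n-d+1}{k-d}\right\}+1
\]
contains a perfect matching.

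The plan is the absorbing method in two phases. First I would prove an \emph{absorbing lemma}: if $\delta_d(H)\ge (c_{k,d}+\eps)\binom{n-d}{k-d}$ then $H$ has a matching $M_{\mathrm{abs}}$ of size $o(n)$ such that $H[V(M_{\mathrm{abs}})\cup W]$ has a perfect matching for every $W\subseteq V(H)\setminus V(M_{\mathrm{abs}})$ with $k\mid |W|$ and $|W|$ small. This follows from the usual supersaturation argument: every $k$-set to be absorbed has $\Omega(n^{2k})$ absorbing configurations, a random subfamily works with positive probability, and one then cleans it up. Second, on $H-V(M_{\mathrm{abs}})$ I would find an \emph{almost perfect matching} leaving only $o(n)$ vertices uncovered; the natural route is to show the degree condition forces a perfect fractional matching (the density $1-((k-1)/k)^{k-d}$ is exactly where the space-barrier obstruction disappears) and then convert it to an integer matching via weak hypergraph regularity. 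Absorbing the uncovered set finishes. This already gives the \emph{asymptotic} statement \eqref{eq:md}, but only once the fractional perfect matching threshold for $d$-degrees is known for the relevant $(k,d)$; for $d<k/2$ this is itself open in general, which is why \eqref{eq:md} is not yet a theorem for all pairs.

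To reach the \emph{exact} bound the $\eps$ must be removed, and here I would use a \emph{stability} dichotomy: if $\delta_d(H)\ge (c_{k,d}-\eps)\binom{n-d}{k-d}$ and $H$ has no perfect matching, then $H$ can be made into a copy of $H^0(n,k)$ (a space barrier) or of a member of $\mathcal{H}_{\rm ext}(n,k)$ (a divisibility barrier) by changing $o(n^k)$ edges. Given this, one splits into cases. If $H$ is \emph{far} from both families, the honest degree bound $\delta_d(H)\ge c_{k,d}\binom{n-d}{k-d}$ suffices, because being far from $H^0(n,k)$ restores the slack needed for the fractional perfect matching, while being far from $\mathcal{H}_{\rm ext}(n,k)$ rules out the parity obstruction to completing the absorption — here one would use a lattice-based refinement of the absorbing method to certify that $M_{\mathrm{abs}}$ can correct any admissible residue. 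If instead $H$ is \emph{close} to $H^0(n,k)$ or to some member of $\mathcal{H}_{\rm ext}(n,k)$, one does a direct combinatorial case analysis: the exact threshold guarantees a handful of edges beyond the barrier, and these are manipulated by hand into a perfect matching, using the precise value of $\delta(n,k,d)$ — as in formula \eqref{eq:nk1} for $d=k-1$ and its analogues.

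The main obstacle is twofold. (i) In the space-barrier regime ($d$ small) one needs not just the fractional perfect matching threshold for $d$-degrees but a \emph{stability} version of it, showing that below the threshold the only near-extremal fractional configuration is $H^0(n,k)$; this is genuinely open and is the reason the conjecture is confirmed only for $d\ge k-4$ (where $H^0(n,k)$ is simple enough) and $d\ge k/2$ (where the divisibility barrier dominates and the fractional side is easy). (ii) Even with stability, the near-extremal case analysis invokes the exact value $\delta(n,k,d)$, whose closed form is unknown for general $d$ because it is equivalent to locating the minima of binary Krawtchouk polynomials; so an unconditional exact theorem for all $(k,d)$ would seem to require either progress on Krawtchouk minima or an argument that handles the extremal case without naming $\delta(n,k,d)$. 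A sensible intermediate target, and the natural next step, is the window $k/3\lesssim d<k/2$, where the space barrier still dominates but $k-d$ is small enough that the fractional questions may be within reach.
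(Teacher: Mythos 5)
This statement is an open conjecture, not a theorem, so there is no proof in the paper to compare against; you correctly recognize this and instead sketch a program. Your roadmap is faithful to how the paper describes the known cases being handled: the lower bound from Constructions~\ref{cons:md} and \ref{cons:ms}; the asymptotic upper bound via the absorbing lemma (Lemma~\ref{lem:HPS}, which applies since $c_{k,d}+\eps>1/2$ always) combined with an almost-perfect-matching step, reduced when $d<k/2$ to the fractional threshold via Theorem~\ref{thm:afh}; and the exact bound via a stability dichotomy (Theorem~\ref{thm:TZ3}) plus a near-extremal analysis, with the lattice-based refinement of the absorber to control residues modulo the divisibility barrier. Your obstacle (i) is exactly right and is the crux: Conjecture~\ref{conj:fpm}(i) is open for $k-d\ge 5$, and even where the asymptotic value of $f_d(k,n)$ is known one still wants a stability version.

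One correction on obstacle (ii): the Krawtchouk issue is not in fact a blocker. The conjecture is deliberately stated in terms of $\delta(n,k,d)$ rather than a closed form, and the paper points out that the general extremal case is already disposed of in \cite[Theorem 4.1]{TrZh12} under the bare hypothesis $\delta_d(H)\ge\delta(n,k,d)+1$, with no closed-form evaluation of $\delta(n,k,d)$ required --- this is precisely the ``argument that handles the extremal case without naming $\delta(n,k,d)$'' whose existence you hedge on. So the bottleneck is concentrated in (i): the fractional perfect-matching threshold (and its stability companion) in the space-barrier regime $d<k/2$. Your suggested intermediate target $k/3\lesssim d<k/2$ is reasonable, though note the paper's discussion pins the crossover between the two barriers at roughly $d\approx(1-\ln 2)k\approx 0.307k$, slightly below $k/3$.
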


Note that for all $1\leq d \leq k-1$,
\[
\left( \frac{k-1}{k} \right) ^{k-d} < \left ( \frac{1}{e} \right)^{1-\frac{d}{k}} \ \ \text{ and } \ \
1- \left (\frac{k-1}{k} \right) ^{k \ln 2} \rightarrow \frac{1}{2} \ \ \text{
as } \ \ k \rightarrow \infty,
\]
where $ln$ denotes the natural logarithm.
Thus, for $1\ll k\ll n$, if $d$ is significantly bigger than $(1-\ln 2)k \approx 0.307k$, then  $\delta(n,k, d) > \binom{n-d}{k-d}-\binom{(1-1/k)n -d +1}{k-d}$.
On the other hand, if $d$ is smaller than $(1-\ln 2)k$ then $\delta(n,k, d) < \binom{n-d}{k-d}-\binom{(1-1/k)n -d +1}{k-d}$ for sufficiently large $n$.

Other than the aforementioned results, no other asymptotic or exact value of $m_{d}(k, n)$ is known. When $k\ge 3$ and $1\le d< k/2$,  H\`an, Person and Schacht~\cite{HPS}  gave a general bound:  $m_{d}(k, n)\le ((k-d)/k + o(1)) \binom{n-d}{k-d}$. This was improved by Markstr\"om and Ruci\'nski \cite{MaRu} to  $m_{d}(k, n)\le ((k-d)/k - 1/k^{k-d}+ o(1)) \binom{n-d}{k-d}$ and by K\"uhn, Osthus and Townsend~\cite{KOTo} to
\[
m_{d}(k, n)\le \left( \frac{k-d}k - \frac{k-d-1}{k^{k-d}}+ o(1) \right) \binom{n-d}{k-d}.
\]

\medskip
Let us discuss proof techniques. Most aforementioned results were obtained by the \emph{absorbing method}, initiated by R\"odl, Ruci\'nski, and Szemer\'edi \cite{RRS06}.  Roughly speaking, the absorbing method reduces the task of finding a spanning sub(hyper)graph to that of finding a near spanning sub(hyper)graph by using some \emph{absorbing structure}. Given a $k$-graph that contains a matching $M$ and a vertex set $S$ such that $V(M)\cap S= \emptyset$, we say that $M$ \emph{absorbs} $S$ if there is another matching $M'$ with $V(M') = V(M)\cup S$. Suppose we want to prove \eqref{eq:md} for some $d<k$. Let $a= \max \left\{ \frac12, 1 - (\frac{k-1}{k})^{k-d} \right\}$. Let $H$ be a $k$-graph with $\d_d(H)\ge (a + 2\g) \binom{n-d}{k-d}$ for some $\g> 0$.
We first apply the following absorbing lemma of H\`an, Person and Schacht~\cite[Lemma 2.4]{HPS}.

\begin{lemma}\cite{HPS}
\label{lem:HPS}
For all $\g>0$ and positive integers $k> d$ there exists $n_0$ such that the following holds for all $n\ge n_0$. Suppose $H$ is a $k$-graph on $n$ vertices with $\d_d(H)\ge (\frac12 + 2\g) \binom{n-d}{k-d}$, then $H$ contains a matching $M$ of size $\g^k n/k$ that can absorb any vertex set $W\subseteq V(H)\setminus V(M)$ with $|W|\in k\mathbb{N}$ and $|W|\le \g^{2k} n$.
\end{lemma}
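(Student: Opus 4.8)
The plan is to construct $M$ by the probabilistic \emph{absorbing technique}. Fix a notion of local absorber: for a $k$-set $S\subseteq V(H)$, call a $k$-set $T$ disjoint from $S$ an \emph{$S$-absorber} if $T\in E(H)$ and $H[S\cup T]$ has a perfect matching (two disjoint edges). (Depending on how small $d$ is, one may instead need the gadget to span a bounded number of edges of $M$ rather than a single edge $T$; the rest of the argument is unaffected.) The reduction is then: given an arbitrary $W$ with $|W|\in k\mathbb N$, partition it into $k$-sets $W=S_1\cup\cdots\cup S_m$ and absorb the $S_j$ one at a time, each time deleting a distinct $S_j$-absorber $T_j\in M$ and inserting the two edges on $S_j\cup T_j$; this turns $M$ into a matching on $V(M)\cup W$. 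Since $T_j\subseteq V(M)$ and each $S_i\subseteq W$ are disjoint whenever $W\cap V(M)=\emptyset$, the only thing to check is that an unused $S_j$-absorber of $M$ is available at each step. So it suffices to build a matching $M$ with $|M|\le \g^k n/k$ such that \emph{every} $k$-set $S$ has at least $\g^{2k}n/k$ $S$-absorbers inside $M$, because then $m=|W|/k\le\g^{2k}n/k$ absorbers always suffice.

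The crux is a supersaturation estimate: for some $\b=\b(\g,k)>0$, every $k$-set $S$ has at least $\b n^k$ $S$-absorbers in $H$. I would prove this by greedily building an $S$-absorber $T=\{u_1,\dots,u_k\}$ one vertex at a time, maintaining the invariant that the two edges witnessing the perfect matching of $H[S\cup T]$ can still be completed. Most steps offer $\Omega(n)$ — more precisely $\Omega(n^{k-d})$ for the relevant partial extensions — valid choices, directly from $\d_d(H)\ge(\tfrac12+2\g)\binom{n-d}{k-d}$; the decisive step is the one where a vertex of $T$ must simultaneously extend a part of $S$ and a part of the already-chosen part of $T$ to edges of $H$, and here the density being strictly above $1/2$ forces two candidate sets, each of size just over $n/2$, to intersect. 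Multiplying the number of choices gives $\Omega(n^k)$ completions, and discarding the $O(n^{k-1})$ sets meeting $S$ leaves $\ge\b n^k$. \textbf{This is the step I expect to be the main obstacle}: with only a $d$-degree (not codegree) hypothesis, individual $(k-1)$-subsets may have negligible degree, so the greedy construction must be organized to extend only $d$-subsets — or to average over the choice of which $d$-subset to extend — and one must verify that the intersection argument still closes the configuration up when the two halves are allowed to overlap $S$ in a controlled way.

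Granting the supersaturation bound, the choice of $M$ is routine. For $n$ large, put each $k$-set of $V(H)$ into a random family $\F$ independently with probability $p=\Theta(n^{1-k})$, chosen so that the expectations below have the right size (this is where the exponents $\g^k$ and $\g^{2k}$ in the statement come from). By Markov's and Chernoff's inequalities, with positive probability the following hold simultaneously: $|\F|\le\g^k n/k$; for every $k$-set $S$ the number of $S$-absorbers in $\F$ is at least $\tfrac12 p\b n^k$; and the number of intersecting pairs of $k$-sets inside $\F$ is at most $\tfrac12 p\b n^k-\g^{2k}n/k$. Fix such an $\F$, delete every non-edge and one member from each intersecting pair; the surviving family $M$ is a matching with $|M|\le\g^k n/k$, and since deleting non-edges removes no absorbers, every $k$-set $S$ still has at least $\g^{2k}n/k$ $S$-absorbers in $M$. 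By the reduction in the first paragraph, $M$ then absorbs every $W\subseteq V(H)\setminus V(M)$ with $|W|\in k\mathbb N$ and $|W|\le\g^{2k}n$, as required.
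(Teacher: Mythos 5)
Your skeleton — define local absorbers, prove a supersaturation bound, then thin a random family into a matching — is exactly the H\`an--Person--Schacht strategy, and your probabilistic cleanup in the final paragraph is sound as stated. But the gap you yourself flag in the middle is fatal for the single-edge gadget, and your fallback (``extend only $d$-subsets, or average over which $d$-subset to extend'') does not repair it. Concretely, take $k=3$, $d=1$, and let $T=\{x_1,x_2,x_3\}$ be a non-edge. Any single-edge absorber $e$ for $T$ forces a partition of the six vertices of $T\cup e$ into two edges, one of which contains two of the $x_i$; so some pair inside $T$ must have positive codegree. Yet $\delta_1(H)\ge(\tfrac12+2\gamma)\binom{n-1}{2}$ says nothing about pair degrees: start from any $3$-graph with $\delta_1\ge(\tfrac12+3\gamma)\binom{n-1}{2}$ and delete every edge containing at least two of $x_1,x_2,x_3$. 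This removes only $O(n)$ edges per vertex, so $\delta_1$ is essentially unchanged, while $T$ now has zero single-edge absorbers. Hence no $\beta>0$ makes your supersaturation statement true uniformly over all $k$-sets when $d<k/2$, and the obstruction — a degree constraint on a $(k-1)$-set such as $e\setminus\{c\}$ or a pair inside $T$ — cannot be avoided by reorganizing which $d$-subset you extend.

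What makes the lemma go through is a larger gadget whose construction touches only vertex degrees and two-vertex common links, both of which \emph{are} controlled: by the degree monotonicity quoted a few lines above the lemma, $\delta_d(H)\ge(\tfrac12+2\gamma)\binom{n-d}{k-d}$ already yields $\delta_1(H)\ge(\tfrac12+2\gamma)\binom{n-1}{k-1}$, and hence any two vertices $u,w$ have at least $4\gamma\binom{n-1}{k-1}-O(n^{k-2})$ common $(k-1)$-sets $L$ with $\{u\}\cup L,\,\{w\}\cup L\in E(H)$ — this inclusion--exclusion over vertex links is the only place the $>\tfrac12$ threshold is needed. A $k(k-1)$-vertex gadget in this spirit: pick an edge $\{v_1,\dots,v_{k-1},x_k\}$ through $x_k$, then for each $i<k$ pick a common link $L_i$ of $x_i$ and $v_i$, disjoint from everything chosen so far, and set $A=\{v_1,\dots,v_{k-1}\}\cup L_1\cup\cdots\cup L_{k-1}$. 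Then $H[A]$ has the perfect matching $\{\,\{v_i\}\cup L_i : i<k\,\}$ and $H[A\cup T]$ has the perfect matching $\{\,\{x_i\}\cup L_i : i<k\,\}\cup\{\{v_1,\dots,v_{k-1},x_k\}\}$, and the greedy count gives $\Omega(n^{k(k-1)})$ such $A$ for \emph{every} $k$-set $T$. Your random-selection argument then applies almost verbatim, with ``$k$-set'' replaced by ``$k(k-1)$-set'', $p=\Theta(n^{1-k(k-1)})$, and each absorption deleting $k-1$ edges of $M$ and inserting $k$ new ones per absorbed $k$-set $S\subseteq W$.
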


We next remove $V(M)$ from $H$ and let $H'=H[V(H)\setminus V(M)]$. Then $\d_d(H)\ge (a + \g) \binom{n-d}{k-d}$. If we can show that $H'$ contains a matching that covers all but at most $\g^{2k} n$ vertices, then $M$ can absorb these vertices and we obtain the desired perfect matching of $H$.
Thus it suffices to prove that \emph{every $k$-graph on $n$ vertices with $\d_d(H)\ge \left(a + o(1) \right)  \binom{n-d}{k-d}$ contains a matching that covers all but $o(n)$ vertices}. We call this assertion the \emph{almost perfect matching lemma}, and note that it is weaker than an asymptotic version of Conjecture~\ref{conj:ms} from Section~\ref{ss:2}.
When $d\ge k/2$, this lemma essentially follows from a greedy argument (see \eg, \cite[Theorem 1.3]{HPS}),
but it appears hard to prove the lemma in general when $d < k/2$.
As shown in \cite{AFHRRS, RR-survey}, it suffices to find an almost perfect \emph{fractional matching} instead (see Section~\ref{ss:3} for details). This helps the authors of \cite{AFHRRS} to obtain $m_d(k, n)$ asymptotically for $d\ge k-4$. However, finding an almost perfect fractional matching for smaller $d$ is still an open problem (see Conjecture~\ref{conj:fpm}).

Now suppose we want to find $m_d(k, n)$ \emph{exactly}.
Naturally we separate the \emph{extremal case} (when $H$ is close to the extremal configuration) from the non-extremal case.
The proof for the non-extremal case follows the procedure described above, except that we may assume that $H$ is \emph{not} close to the extremal configuration when proving the absorbing lemma and the almost perfect matching lemma. For example, suppose $1 - (\frac{k-1}{k})^{k-d}< 1/2$, we need the following refinement of Lemma~\ref{lem:HPS}. Let $\eps>0$ and $H$ and $H'$ be two $k$-graphs on the same $n$ vertices. We say that $H$ is \emph{$\eps$-close to $H'$} if $H$ becomes a copy of $H'$ after adding and deleting at most $\eps n^k$ edges. Let $\mathcal B_{n,k}$ ($\overline{\mathcal B}_{n,k}$) denote the $k$-graph whose vertex set can be partitioned into $A, B$ with $|A|=\lfloor n/2 \rfloor$ and $|B|=\lceil n/2 \rceil$ such that all its edges intersect $A$ in an odd (even) number of vertices.
\begin{theorem}\cite[Theorem 5]{TrZh15}
\label{thm:TZ3}
Given any $\eps >0$ and integer $k \geq 2$, there exist $0< \a, \xi < \eps$
and $n_0 \in \mathbb N$ such that the following holds.  Suppose that $H$ is a $k$-graph on $n \geq n_0$ vertices with $\delta _{1} (H) \geq \left( \frac{1}{2}-\a \right) \binom{n - 1}{k- 1}$.
Then $H$ is $\eps$-close to $\mathcal B_{n,k}$ or $\overline{\mathcal B}_{n,k}$, or $H$ contains
a matching $M$ of size $|M| \le \xi n/k$ that absorbs any set $W\subseteq V(H) \setminus V(M)$ such that $|W| \in k\mathbb{N}$ with $|W| \le \xi^2 n$.
\end{theorem}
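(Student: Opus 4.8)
\emph{Setup.} The plan is to run the absorbing-method skeleton, grafting a stability argument onto the step that produces the absorbers. Fix constants $\eps \gg \a \gg \b \gg \xi$ (with $\a,\xi<\eps$ as required, $\b$ auxiliary) and take $n_0$ large. For a $k$-set $S=\{v_1,\dots,v_k\}\subseteq V(H)$ call a $k^2$-set $T$ disjoint from $S$ an \emph{$S$-absorber} if $T$ can be written as a disjoint union of $k$ edges $f_1,\dots,f_k$ of $H$ together with a choice of vertices $u_i\in f_i$ such that $\{u_1,\dots,u_k\}\in H$ and $(f_i\setminus\{u_i\})\cup\{v_i\}\in H$ for every $i$. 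Then $\{f_1,\dots,f_k\}$ is a perfect matching of $H[T]$, while $\{u_1,\dots,u_k\}$ together with the $k$ edges $(f_i\setminus\{u_i\})\cup\{v_i\}$ is a perfect matching of $H[T\cup S]$; write $\mathcal A(S)$ for the family of all $S$-absorbers.

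\emph{Step 1: the dichotomy for absorber counts.} I would prove that if $H$ is \emph{not} $\eps$-close to $\dB$ or $\doB$, then $|\mathcal A(S)|\ge \b n^{k^2}$ for every $k$-set $S$. One bounds $|\mathcal A(S)|$ from below greedily: it suffices to pick an edge $\{u_1,\dots,u_k\}\in H$ and then, for each $i$ in turn, a $(k-1)$-set $R_i$ avoiding all previously used vertices with $R_i\cup\{v_i\}\in H$ and $R_i\cup\{u_i\}\in H$ (so $f_i=R_i\cup\{u_i\}$). Since $\delta_1(H)\ge(\tfrac12-\a)\binom{n-1}{k-1}$, each ``link'' $N(w)$ (the family of $(k-1)$-sets completing $w$ to an edge) has size at least $(\tfrac12-\a)\binom{n-1}{k-1}$, so the only obstruction to $|N(v_i)\cap N(u_i)|=\Omega(n^{k-1})$ is that $N(v_i)$ and $N(u_i)$ are almost \emph{complementary}, and the only obstruction to choosing the seed edge is that the admissible $u_i$'s cannot be completed to an edge. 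Arguing that both obstructions can persist only under a rigid global structure is exactly a stability statement: it forces a near-balanced partition $V=A\cup B$ with $|A|=n/2+o(n)$ and a fixed parity $j\in\{0,1\}$ such that all but $o(n^k)$ edges meet $A$ in $j \bmod 2$ vertices; correcting the split to $\lfloor n/2\rfloor$ and the $o(n^k)$ exceptional edges shows $H$ is $\eps$-close to $\dB$ (if $j=1$) or to $\doB$ (if $j=0$). This is where the hypothesis $\delta_1(H)\ge(\tfrac12-\a)\binom{n-1}{k-1}$ is genuinely used, and it is the main obstacle: because the barrier is specified both by a near-balanced bipartition and by a parity class, the stability analysis must pin down both, which is what makes this step delicate and lengthy — Steps 2 and 3 are routine by comparison.

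\emph{Step 2: random selection of the absorbing matching.} Assume now $|\mathcal A(S)|\ge\b n^{k^2}$ for all $k$-sets $S$. Choose a random family $\mathcal F$ of $k^2$-sets by including each one independently with probability $p=c\,\xi\,n^{1-k^2}$ for a small constant $c=c(k)$. Then $\mathbb E|\mathcal F|=\Theta(c\xi n)$, the expected number of intersecting pairs in $\mathcal F$ is $O((c\xi)^2 n)=o(c\xi n)$, and for every $S$ the expected size of $\mathcal F\cap\mathcal A(S)$ is $p\b n^{k^2}=\Theta(c\b\xi n)$; by Markov's inequality together with standard concentration, some outcome satisfies all three bounds simultaneously. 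Deleting one member from each intersecting pair yields a family of pairwise vertex-disjoint $S$-absorbers; let $M$ be the union of the edges $f_i$ appearing in them. After choosing $c$ suitably, $|M|\le k|\mathcal F|\le \xi n/k$, and every $k$-set $S$ has at least $\xi^2 n$ of its absorbers lying entirely inside $M$.

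\emph{Step 3: iterated absorption.} Given $W\subseteq V(H)\setminus V(M)$ with $|W|\in k\mathbb N$ and $|W|\le\xi^2 n$, set $m=|W|/k\le\xi^2 n/k$ and partition $W=S_1\cup\dots\cup S_m$ into $k$-sets. Process them one at a time: having used absorbers $T_1,\dots,T_{j-1}$ (with $j-1<m$), pick an $S_j$-absorber $T_j\subseteq V(M)$ disjoint from $T_1,\dots,T_{j-1}$ — possible since $S_j$ has at least $\xi^2 n>m$ absorbers inside $M$ and each $T_\ell$ blocks at most $k^2$ of them, so at most $(j-1)k^2<\xi^2 n$ are blocked for $n$ large — and replace the edges $f_1,\dots,f_k$ of $T_j$ in the current matching by $\{u_1,\dots,u_k\}$ and the edges $(f_i\setminus\{u_i\})\cup\{v_i\}$. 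After $m$ steps we obtain a matching on $V(M)\cup W$ agreeing with $M$ outside $T_1\cup\dots\cup T_m$, which shows that $M$ absorbs $W$, as required.
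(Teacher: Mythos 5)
Your overall framework---$k^2$-vertex absorbers, a dichotomy between ``$H$ is close to a barrier'' and ``every $k$-set has $\Omega(n^{k^2})$ absorbers,'' then random selection and iterated absorption---is the right one for this theorem, and Steps 2 and 3 are essentially correct modulo routine bookkeeping. (One small slip in Step 3: since the selected absorbers are pairwise disjoint, each used absorber blocks exactly one candidate, not $k^2$; the bound ``$(j-1)k^2 < \xi^2 n$'' you write is false in general, but the argument goes through with the correct count $j-1 < m \le \xi^2 n/k$.)

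The gap is Step~1, and it is not a minor one: Step~1 \emph{is} the theorem, while Steps 2 and 3 are the standard wrapping used in every absorbing-method proof. You describe what the stability lemma should say, correctly identify it as the crux, and then remark that pinning down the bipartition and the parity simultaneously ``is what makes this step delicate and lengthy.'' That is an accurate description of the difficulty, but it is not a proof. Two non-trivial things are skipped. First, the greedy count requires $\Omega(n^k)$ seed edges $\{u_1,\dots,u_k\}$ such that $|N(u_i)\cap N(v_i)|=\Omega(n^{k-1})$ holds for \emph{all} $i$ simultaneously; failure of this joint condition for some $k$-set $S$ does not immediately yield a single well-defined bipartition $V=A\cup B$, since near-complementarity of links is a pairwise relation and one has to extract from it a coherent two-class structure on $V$. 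Second, even granted such a partition in which same-part vertices have almost identical links and cross-part vertices have almost complementary links, one must further argue that the common link is close to a \emph{parity class} with respect to $A$ (rather than some other $(k-1)$-graph of density about $1/2$), that the parity $j\in\{0,1\}$ is the same for all vertices, and that $|A|=n/2+o(n)$; only after all of this, together with the degree-sum lower bound $e(H)\ge(\frac12-\a)\binom nk$, does $\eps$-closeness to $\dB$ or $\doB$ (which requires agreement in both directions) follow. None of this is carried out, so the proposal reduces Theorem~\ref{thm:TZ3} to its key lemma without establishing that lemma.
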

When $1 - (\frac{k-1}{k})^{k-d}< 1/2$, a general extremal case was solved in \cite[Theorem 4.1]{TrZh12}: \emph{for $1\le d< k$, there exists $n_0$ such that every $k$-graph $H$ on $n\ge n_0$ vertices contains a perfect matching if $H$ is $\eps$-close to $\mathcal B_{n,k}$ or $\overline{\mathcal B}_{n,k}$ and $\d_d(H)\ge \d(n, k, d)+1$}.

 \subsection{Smaller matchings}
\label{ss:2}

In this section we discuss the connection between $\nu(H)$, the size of the largest matching in $H$, and the minimum $d$-degree $\delta_d(H)$. Given $0 \le d< k\le n$ and $s\le n/k$, let $\mathbf{m_d^s(k, n)}$ be the smallest $m$ such that every $k$-graph $H$ on $n$ vertices with $\delta_d(H)\ge m$ contains a matching of size $s$.

When $d=k-1$, R\"odl, Ruci\'nski and Szemer\'edi \cite{RRS06, RRS09} noticed a striking contrast between the codegree threshold for perfect matchings (about $n/2$ as seen in Theorem~\ref{thm:RRS}) and the one for almost perfect matchings (about $n/k$ as shown below). We present the simple proof from \cite{RRS09} because it is a good example of a greedy argument.

\begin{proposition}\cite[Fact 2.1]{RRS09}
\label{prop:RRS}
Given integers $n\ge k\ge 2$, every $k$-graph $H$ with $\d_{k-1}(H)\ge s$ for some $s \le \lfloor n/k \rfloor - k + 2$ contains a matching of size $s$.
\end{proposition}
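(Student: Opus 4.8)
The plan is to run a short augmentation-plus-counting argument starting from a maximum matching. Let $M$ be a maximum matching of $H$, put $t=|M|=\nu(H)$, and let $U=V(H)\setminus V(M)$ be the uncovered set, so $|U|=n-tk$. If $t\ge \lfloor n/k\rfloor-k+2$ we are immediately done, since $s\le \lfloor n/k\rfloor-k+2\le t$. So assume $t\le \lfloor n/k\rfloor-k+1$; then $|U|=n-tk\ge n-k\lfloor n/k\rfloor+k^2-k\ge k(k-1)$, because $n-k\lfloor n/k\rfloor\ge 0$. Since $|U|\ge k(k-1)$ I can choose $k$ pairwise disjoint $(k-1)$-subsets $S_1,\dots,S_k$ of $U$. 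Because $M$ is maximum, no $S_j$ has a neighbour in $U$ (such a neighbour $v$ would give an edge $S_j\cup\{v\}$ lying entirely in $U$, which could be added to $M$); hence every one of the $\deg(S_j)\ge \delta_{k-1}(H)\ge s$ vertices $v$ with $S_j\cup\{v\}\in E(H)$ lies in $V(M)$.

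The key step will be an augmentation observation applied to each edge $e\in M$. For $e\in M$ consider the bipartite ``link'' graph $L_e$ between the sets $S_1,\dots,S_k$ and the $k$ vertices of $e$, joining $S_j$ to a vertex $x\in e$ whenever $S_j\cup\{x\}\in E(H)$. I claim $L_e$ contains no matching of size $2$: if $S_{j_1}\cup\{x\}$ and $S_{j_2}\cup\{x'\}$ are both edges of $H$ with $j_1\ne j_2$ and distinct $x,x'\in e$, then these two new edges are disjoint from each other and from $M\setminus\{e\}$, so $(M\setminus\{e\})\cup\{S_{j_1}\cup\{x\},\,S_{j_2}\cup\{x'\}\}$ is a matching of size $t+1$, contradicting maximality. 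A bipartite graph with no matching of size $2$ has all its edges through a single vertex, so $L_e$ has at most $k$ edges.

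Finally I would double-count the pairs $(S_j,v)$ with $v\in V(M)$ and $S_j\cup\{v\}\in E(H)$. Summing over the $S_j$ and using that all their neighbours lie in $V(M)$, this count equals $\sum_{j=1}^{k}\deg(S_j)\ge k\,\delta_{k-1}(H)\ge ks$; summing over the edges $e\in M$ it is $\sum_{e\in M}|E(L_e)|\le kt$. Hence $ks\le kt$, i.e.\ $\nu(H)=t\ge s$, which completes the proof. The main obstacle is the first step: one needs $|U|\ge k(k-1)$ so that $k$ pairwise disjoint $(k-1)$-sets actually fit inside $U$, and this is exactly the room provided by the hypothesis $s\le \lfloor n/k\rfloor-k+2$; the only other point requiring care is verifying in the augmentation that the two swapped-in edges are genuinely disjoint from all of $M\setminus\{e\}$ and from each other.
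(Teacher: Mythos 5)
Your proof is correct and follows essentially the same route as the paper: a maximum matching $M$, disjoint $(k-1)$-subsets $S_1,\dots,S_k$ in the uncovered set (made possible by the bound on $s$), the observation that all their neighbours lie in $V(M)$, and a two-edge swap to bound how many of those neighbours can sit inside a single edge of $M$. The only cosmetic difference is that the paper argues by contradiction (pigeonholing a single edge $e$ with at least $k+1$ incidences, then exhibiting the two swap edges), whereas you bound $|E(L_e)|\le k$ uniformly via K\H{o}nig and sum over $e\in M$; the underlying augmentation and counting are identical.
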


\begin{proof}
Let $M$ be the largest matching of $H$ and $U= V(H)\setminus V(M)$. Suppose to the contrary, that $|M|< s$ and thus $|V(M)|\le k(\lfloor n/k \rfloor - k + 1)\le n - (k-1)k$. Hence $|U|\ge (k-1)k$. Let $S_1, \dots, S_k$ denote (arbitrary) $k$ disjoint $(k-1)$-subsets of $U$. By the minimum codegree condition, we have $\sum_{i=1}^k \deg(S_i)\ge ks$. All the neighbors of $S_i$ are in $V(M)$ -- otherwise we can enlarge $M$. Since $|M|< s$, there exists an edge $e\in M$ that contains at least $k+1$ neighbors of $S_1, \dots, S_k$. Consequently there are two vertices $v_1, v_2\in e$ and $i\ne j$ such that $
v_1\in N(S_i)$ and $v_2\in N(S_j)$. By replacing $e$ with $\{v_1\}\cup S_i$ and $\{v_2\}\cup S_j$, we obtain a matching of size $|M|+1$, contradiction.
\end{proof}

When $k$ does not divide $n$, the largest matching in a $k$-graph on $n$ vertices is of size $\lfloor n/k \rfloor$. Such matching is called a \emph{near perfect} matching. R\"odl, Ruci\'nski and Szemer\'edi \cite{RRS09} proved that every $k$-graph $H$ on $n$ vertices with $\d_{k-1}(H)\ge n/k + O(\log n)$ contains a near perfect matching and conjectured that $\d_{k-1}(H)\ge \lfloor n/k \rfloor$ suffices.
Using the absorbing method, Han \cite{Han14} recently proved this conjecture.
\begin{theorem}\cite{Han14}
\label{thm:Han1}
Let $n\ge k\ge 2$ be integers such that $k \nmid n$ and $n$ is sufficiently large. Then every $k$-graph $H$ on $n$ vertices with $\d_{k-1}(H)\ge \lfloor n/k \rfloor$ contains a matching of size $\lfloor n/k \rfloor$.
\end{theorem}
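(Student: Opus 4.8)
The plan is to observe that Proposition~\ref{prop:RRS} already produces a matching missing only a bounded number of vertices, and then to close the remaining gap with an absorbing device adapted to the low degree threshold $\lfloor n/k\rfloor$.

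Write $n = k\ell + r$ with $\ell = \lfloor n/k \rfloor$ and $1 \le r \le k-1$. Since $\delta_{k-1}(H) \ge \ell \ge \lfloor n/k\rfloor - k + 2$, Proposition~\ref{prop:RRS} gives a matching of size $\ell - k + 2$, that is, a matching of $H$ leaving exactly $C := r + k(k-2)$ vertices uncovered. For $k = 2$ we have $C = r = 1$ and are done, so assume $k \ge 3$; it then suffices to take such a matching and \emph{absorb} the $C - r = k(k-2)$ surplus uncovered vertices into it to obtain a matching of size $\ell$. It is worth noting the equivalent maximum-matching picture: if a largest matching $M$ had $|M| < \ell$, then $U := V(H)\setminus V(M)$ has $k < |U| \le k^2$, is independent, and every $(k-1)$-subset $S \subseteq U$ satisfies $N_H(S) \subseteq V(M)$ and $\deg_H(S) \ge \ell$; the task is to rearrange $M$ around $U$ so as to free $k$ vertices of $U$ at a time.

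The main tool is an absorbing matching: before the bulk matching is taken one reserves a short matching $M_0$ in $H$ such that for every $W \subseteq V(H)\setminus V(M_0)$ with $|W| \in k\mathbb{N}$ and $|W|$ below a suitable constant there is a matching $M_0'$ with $V(M_0') = V(M_0) \cup W$. As usual, such an $M_0$ is assembled by a random-selection argument once one knows that every $k$-set $T$ of vertices has many ``$T$-absorbers'': constant-size matchings $F$ with $V(F)\cap T = \emptyset$ for which both $V(F)$ and $V(F)\cup T$ span perfect matchings of $H$. Given $M_0$, one runs Proposition~\ref{prop:RRS} on $H - V(M_0)$ to obtain a matching covering all but $O_k(1)$ vertices, and then $M_0$ absorbs all but $r$ of the remaining vertices, completing a matching of size $\ell$.

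I expect the crux to be making this work at degree $\lfloor n/k\rfloor$, which is far below the $\approx n/2$ demanded by the standard absorbing lemma (Lemma~\ref{lem:HPS}). Two obstructions arise: $T$-absorbers need not be plentiful for \emph{every} $k$-set $T$, and deleting $V(M_0)$ can lower $\delta_{k-1}$ by as much as $k|M_0|$, which is exactly enough to break the tight inequality in Proposition~\ref{prop:RRS} when it is re-applied to $H - V(M_0)$. The resolution should be a stability dichotomy: either (i) $T$-absorbers are abundant for every $T$ and the minimum-codegree $(k-1)$-sets are spread out enough that $M_0$ can be chosen to avoid most of their neighbourhoods --- so the deletion costs only $O(|M_0|)$ in the codegree and the scheme of the previous paragraph goes through --- or (ii) $H$ is $\eps$-close to the space barrier $H^0_{\lfloor n/k\rfloor}(n,k)$ of Construction~\ref{cons:ms}, which has $\delta_{k-1} = \ell - 1$ and no matching of size $\ell$. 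In case (ii) the hypothesis $\delta_{k-1}(H) \ge \ell$ forces, for almost every $(k-1)$-set $S$ inside the large part $B$ of the barrier, an edge of $H$ lying entirely inside $B$ and containing $S$; these ``surplus'' edges are exactly what is needed to mop up the excess vertices of $B$ left over by a near-perfect matching, and case (ii) is finished by a direct argument. The genuinely hard steps are therefore (a) the absorbing lemma under the weak degree condition --- equivalently, showing that failure of a well-distributed supply of absorbers forces $\eps$-closeness to the space barrier --- and (b) the extremal analysis in case (ii), which must be carried out for each residue $r = n \bmod k$ since the deficiency of the barrier depends on it.
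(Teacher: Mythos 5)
Your high-level plan --- obtain a matching of size $\ell - k + 2$ via Proposition~\ref{prop:RRS} and then absorb the $k(k-2)$ surplus uncovered vertices, with a stability dichotomy (abundant absorbers versus $\eps$-closeness to the space barrier $H^0_{\ell}(n,k)$, where $\ell = \lfloor n/k\rfloor$) to cope with the low codegree threshold --- is the right framework and is consistent with the survey's attribution to the absorbing method. You correctly identify that the standard absorbing lemma (Lemma~\ref{lem:HPS}, which needs codegree about $n/2$) is useless here, and that the two hard steps are (a) building absorbers at codegree $\ell$ and (b) the extremal analysis. Since these two steps constitute essentially the entire proof and you leave both as plans, the proposal is an accurate outline rather than a proof.

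There is also a concrete quantitative gap in the reduction itself. To run Proposition~\ref{prop:RRS} on $H - V(M_0)$, you need $\delta_{k-1}(H - V(M_0)) \ge \lfloor (n - k|M_0|)/k\rfloor - k + 2 = \ell - |M_0| - (k-2)$, so the codegree drop caused by deleting $V(M_0)$ must be at most $|M_0| + (k-2)$, not merely $O(|M_0|)$ as you state. For a $(k-1)$-set $S$ with $\deg_H(S)=\ell$ exactly, a randomly chosen $M_0$ of size $m$ meets $N(S)$ in about $m(1 - r/n) < m$ vertices in expectation, but a union bound over all $\binom{n}{k-1}$ such $S$ costs a deviation of order $\sqrt{m\log n}$, which exceeds the slack $k-2$ for every choice of $m$. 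Patching this forces $|M_0|$ and the leftover to be absorbed to grow like $\log n$, which in turn means the absorbing device must tolerate absorbing $\Theta(\log n)$ vertices rather than a bounded set, and none of this bookkeeping is in your write-up. A cleaner route that avoids this accounting altogether is to forgo the pre-reserved $M_0$: take a maximum matching $M$ directly, so that $U := V\setminus V(M)$ has $|U| \le r + k(k-2)$ and $N_H(S)\subseteq V(M)$ for every $(k-1)$-set $S\subseteq U$ with no codegree lost, and then show that some bounded collection of edges of $M$ together with $k$ vertices of $U$ can be re-partitioned into one more edge unless $H$ is close to $H^0_{\ell}(n,k)$. Building the absorbers inside the maximum matching is both simpler to account for and closer in spirit to the ``local improvement unless extremal'' structure your dichotomy is aiming for.
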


A corollary of Theorem~\ref{thm:Han1} is $m^s_{k-1}(k, n)\le s$ for \emph{all} $s< n/k$ and sufficiently large $n$, which generalizes Proposition~\ref{prop:RRS}. To see this, let $H=(V, E)$ be a $k$-graph on $n$ vertices with $\d_{k-1}(H)\ge s$ for some $s< n/k$. By adding a set $T$ of about $\frac{n - ks}{k-1}$ vertices to $V$ and all the $k$-subsets of $V\cup T$ that intersect $T$  to $E$, we obtain a $k$-graph $H'$
on $n'$ vertices with $\d_1(H)\ge \lfloor n'/k \rfloor$ and $n'\not\in k\mathbb{N}$. Applying Theorem~\ref{thm:Han1}, we obtain a near perfect matching of $H'$. Removing the vertices of $T$ from this matching, we obtain a matching in $H$ of size at least $s$.

For any integer $s\le  n/k$, the $k$-graph $H^0_s(n, k)$ defined in Construction~\ref{cons:ms} satisfies $\nu(H^0_s(n, k))=s-1$ and $\delta_d(H)= \binom{n-d}{k-d}-\binom{ n-s-d +1}{k-d}$ for all $0\le d\le k-1$. This implies that
\begin{equation}\label{eq:H0}
    m_d^s(k, n) \ge \binom{n-d}{k-d}-\binom{ n-s-d +1}{k-d}+1 \quad \text{for all } s\le n/k.
\end{equation}
Combining this with the aforementioned corollary of  Theorem~\ref{thm:Han1},
we obtain that for all $k\ge 2$ and sufficiently large $n$,
\[
m^s_{k-1}(k, n)= s \qquad \text{for all $s< n/k$}.
\]
This prompts us to conjecture that $H^0_s(n, k)$ provides the correct value of $m_d^s(k, n)$ for all $d<k$ and most $s< n/k$. As noted in Section~\ref{ss:1}, an asymptotic version of this conjecture already implies \eqref{eq:md}, which determines $m_d(k, n)$ asymptotically for all $d<k$.

\begin{conjecture}\label{conj:ms}
Given $1\le d\le k-2$, there exist $n_0$ and $C$ such that
\begin{equation}\label{eq:msd}
    m_d^s(k, n) = \binom{n-d}{k-d}-\binom{ n-s-d +1}{k-d}+1
\end{equation}
for all $n\ge n_0$ and all $s\le n/k -C$.
\end{conjecture}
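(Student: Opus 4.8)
The lower bound in \eqref{eq:msd} is exactly \eqref{eq:H0}, so the task is the matching upper bound: every $n$-vertex $k$-graph $H$ with $\delta_d(H)\ge\binom{n-d}{k-d}-\binom{n-s-d+1}{k-d}+1$ contains a matching of size $s$, equivalently $\nu(H)\ge s$ (a larger matching contains a smaller one), and it suffices to argue for each fixed $s\le n/k-C$. Unlike the perfect-matching problem, the divisibility barriers of Construction~\ref{cons:md} are harmless here: a matching of size $s<n/k$ need not meet the part $A$ in any prescribed parity, and a $k$-graph that is $\eps$-close to a member of $\mathcal H_{\rm ext}(n,k)$ still has $\nu\ge\lfloor n/k\rfloor-1\ge s$. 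So the space barrier $H^0_s(n,k)$ should be the only extremal example, which is what makes the stated value plausible strictly below $n/k$.

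My plan is the absorbing method in the style of \cite{RRS06,HPS,TrZh12,TrZh13,Han14}, with a split into a \emph{non-extremal case} ($H$ is $\eps$-far from $H^0_s(n,k)$) and an \emph{extremal case}. For $s$ of order $n$, the non-extremal case proceeds in three steps: (i) produce an absorbing matching $M_0$ of size $o(n)$ that absorbs every balanced $W\subseteq V(H)\setminus V(M_0)$ with $|W|\le\beta n$ --- via \cite[Lemma 2.4]{HPS} when $\delta_d(H)\ge(\frac12+o(1))\binom{n-d}{k-d}$, and otherwise by a space-barrier-aware refinement in the spirit of Theorem~\ref{thm:TZ3}, using non-extremality to supply enough $(k+1)$-vertex absorbers for each $k$-set; (ii) the key \emph{$\nu$-lemma}: $H-V(M_0)$ contains a matching of size at least $s-o(n)$ (for $s$ near $n/k$ this is the almost perfect matching lemma of Section~\ref{ss:1}; for smaller $s=\Omega(n)$ it is the analogous statement asking for a matching of size roughly $s$); (iii) routine bookkeeping: given such a matching $M_1$, the vertices outside $M_0\cup M_1$ that are still needed form a balanced set of size $\le\beta n$ that $M_0$ absorbs, yielding a matching of size exactly $s$ (and if $\nu(H-V(M_0))\ge s-|M_0|$ already, no absorbing is needed). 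For $s=o(n)$ no absorber is needed and step (ii) should be replaced by a direct augmentation argument generalizing Proposition~\ref{prop:RRS} to $d<k-1$. In the extremal case, where $\nu(H)$ can genuinely be $s-1$, I would run a stability analysis around a near-partition $V=A\cup B$ with $|A|\approx s-1$: since $\delta_d(H)>\delta_d(H^0_s(n,k))$, either $A$ is large enough to assemble $s$ almost-disjoint $A$-meeting edges greedily, or $B$ spans enough edges to complete a size-$s$ matching --- a finite case analysis along the lines of \cite[Theorem 4.1]{TrZh12}, with the space barrier in place of the divisibility barrier.

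The main obstacle is step (ii). When $d\ge k/2$ the $\nu$-lemma follows from a greedy argument as in \cite[Theorem 1.3]{HPS} (and the whole scheme can equally be organized around the perfect-matching structure behind Theorem~\ref{thm:TZ12}, since below the space-barrier degree the only remaining obstruction is a divisibility barrier, which never forbids a matching of size $s<n/k$), so the plan should settle Conjecture~\ref{conj:ms} for all $d\ge k/2$. For $d\ge k-4$ one instead produces an almost perfect \emph{fractional} matching and rounds it, as in \cite{AFHRRS,RR-survey}, covering those cases too. But for general $1\le d<k/2$ the $\nu$-lemma at the space-barrier degree level is exactly the open problem highlighted in the survey --- the existence of almost perfect (fractional) matchings, Conjecture~\ref{conj:fpm} --- and since an asymptotic form of Conjecture~\ref{conj:ms} is equivalent to \eqref{eq:md}, which is unknown here, a complete proof is out of reach with present techniques. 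In short, neither the absorbing step nor the extremal analysis is the bottleneck; for $1\le d<k/2$ the conjecture is essentially as hard as the asymptotic perfect-matching conjecture.
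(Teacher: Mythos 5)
The statement is Conjecture~\ref{conj:ms}; the paper presents it as an \emph{open problem}, not a theorem, so there is no proof in the paper for your attempt to be measured against. Your writeup is, by your own admission, not a proof either: you correctly identify that the central obstruction is the $\nu$-lemma (essentially Conjecture~\ref{conj:fpm} at the space-barrier degree), and your sketch of the absorbing-plus-stability scheme tracks the framework outlined in Section~\ref{ss:1}. At the level of ``what is the right strategy and where does it get stuck,'' you are on target, and you are honest about the gap.

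Several of your surrounding claims, however, overreach. First, you assert that the plan ``should settle Conjecture~\ref{conj:ms} for all $d\ge k/2$,'' but the paper only records the exact value of $m_d^s(k,n)$ over the full range $s\le n/k$ in the single case $(d,k)=(1,3)$ (K\"uhn--Osthus--Treglown); for other $d\ge k/2$ the exact threshold for all $s\le n/k-C$ has \emph{not} been established. The stability analysis around $H^0_s(n,k)$ is an $s$-dependent case analysis and is not a consequence of Theorem~\ref{thm:TZ12}, which is only the endpoint $s=n/k$. Hence your final verdict that ``neither the absorbing step nor the extremal analysis is the bottleneck'' is unsupported: even granted the $\nu$-lemma, nailing the constant exactly and uniformly in $s$ is substantial work. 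Second, the observation that any $H$ $\varepsilon$-close to $\mathcal{H}_{\rm ext}(n,k)$ still has $\nu(H)\ge\lfloor n/k\rfloor-1$ is false as stated: deleting all edges through a few vertices keeps $\varepsilon$-closeness but destroys large matchings; the degree hypothesis must be invoked. Third, you paint $1\le d<k/2$ as uniformly open, but $(d,k)=(1,3)$ satisfies $d<k/2$ and is resolved, so the split is not cleanly by $d\ge k/2$ versus $d<k/2$. Finally, saying the asymptotic form of Conjecture~\ref{conj:ms} is ``essentially as hard as'' \eqref{eq:md} undersells it: as the paper notes, the asymptotic form is strictly stronger (it \emph{implies} \eqref{eq:md} for all $d$), so it is at least as hard, and likely harder because it also controls $m_d^s$ for $s$ well below $n/k$.
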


When $d=1$, a result of Bollob\'as, Daykin and Erd\H{o}s \cite{BDE} showed that \eqref{eq:msd} holds for all $n> 2k^3 (s+1)$. 
When $d=1$ and $k=3$, K\"uhn, Osthus and Treglown \cite{KOT} proved \eqref{eq:msd} for all $s\le n/3$ and sufficiently large $n$ (not necessarily divisible by $3$).
K\"uhn, Osthus and Townsend~\cite[Conjecture 1.3]{KOTo} proposed an asymptotic version of Conjecture~\ref{conj:ms},
\[
m_d^s(k, n) = \left(1-\left(1 - \frac{s}{n} \right) ^{k-d} + o(1) \right ) \binom{n-d}{k-d},
\] 
and proved it for all $s\le \min\{ \frac{n}{2(k-d)}, \frac{n - o(n)}{k} \}$.


\subsection{Erd\H{o}s Conjecture and fractional matching}
\label{ss:3}

How many edges of a $k$-graph guarantee a matching of size $s$? This question dates back to an old conjecture of Erd\H{o}s \cite{Erd65mat} that has received much attention lately.
\begin{conjecture}\cite{Erd65mat}
\label{conj:Erd}
Let $s, k, n$ be integers such that $2\le k\le n$ and $1\le s\le n/k$. Then
\[
    m_0^s(k, n) = \max \left\{ \binom{ks-1}{k}, \binom{n}{k}-\binom{ n-s +1}{k} \right\}+1.
\]
\end{conjecture}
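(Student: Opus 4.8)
This is a longstanding conjecture of Erd\H{o}s \cite{Erd65mat}, still open in full generality; what follows is the standard line of attack together with the partial results it yields. Write $f(n,k,s)$ for the maximum on the right-hand side. The lower bound $m_0^s(k,n)\ge f(n,k,s)+1$ is immediate from the two extremal families: the complete $k$-graph on $ks-1$ vertices has no matching of size $s$ and $\binom{ks-1}{k}$ edges, while $H^0_s(n,k)$ from Construction~\ref{cons:ms} has no matching of size $s$ and $\binom{n}{k}-\binom{n-s+1}{k}$ edges. So the entire content is the upper bound: every $n$-vertex $k$-graph $H$ with more than $f(n,k,s)$ edges contains a matching of size $s$.

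The first step is Frankl's \emph{shifting} (compression) technique. For $1\le i<j\le n$, the $(i,j)$-shift replaces each edge $e$ with $j\in e$, $i\notin e$, $(e\setminus\{j\})\cup\{i\}\notin E(H)$ by $(e\setminus\{j\})\cup\{i\}$. This operation preserves $e(H)$ and does not increase $\nu(H)$, and after finitely many shifts $H$ becomes \emph{shifted} (left-compressed): whenever $e\in E(H)$, $i<j$, $j\in e$, $i\notin e$, we have $(e\setminus\{j\})\cup\{i\}\in E(H)$. Hence it suffices to prove the upper bound for shifted $H$, whose edge set is downward monotone in the coordinates and therefore highly constrained; in particular, in a shifted $k$-graph with $\nu(H)\le s-1$ the ``last'' vertices interact with matchings only in very restricted ways.

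The second step is an induction on $s$ (or on $n$) that exploits this structure: one deletes a carefully chosen vertex or edge, controls how $\nu$ and $e(H)$ change, and compares the resulting bound against \emph{both} extremal families. When $n$ is large relative to $ks$, the cover construction $H^0_s(n,k)$ dominates, and a stability/link-degree analysis forces an extremal $H$ to be essentially contained in it, proving the conjecture in this range; this is how Erd\H{o}s, then Bollob\'as, Daykin and Erd\H{o}s \cite{BDE} (for $n>2k^3 s$), and later Frankl pushed the threshold down. A complementary probabilistic argument of Huang, Loh and Sudakov --- choose a uniformly random cyclic order of $V(H)$ and bound the expected number of edges that fall inside a fixed interval --- gives the conjecture for $n$ of order $k^2 s$ and underlies reductions to the fractional relaxation (cf.\ Conjecture~\ref{conj:fpm} and Section~\ref{ss:3}); junta/spread-approximation methods have since lowered the range further. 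For small uniformities the conjecture is unconditional: $k=2$ is the Erd\H{o}s--Gallai theorem, and $k=3$ is known.

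The main obstacle is the crossover regime $n\approx ks$, where neither extremal family clearly dominates the \emph{local} structure of an extremal $H$, so the stability argument has nothing to latch onto. Shifting discards too much information to tell the two candidate optima apart once they are comparable in size, and no clean inductive scheme or fractional-relaxation bound is known to be tight there. Resolving this regime --- equivalently, understanding exactly when the clique $\binom{ks-1}{k}$ overtakes $\binom{n}{k}-\binom{n-s+1}{k}$ as the governing structure, not merely as a number --- is what keeps the conjecture open.
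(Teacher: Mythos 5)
This statement is a \emph{conjecture}, not a theorem: the paper presents it as Erd\H{o}s's open problem, establishes only the lower bound (via $K^k_{ks-1}$ and $H^0_s(n,k)$), and then lists partial results due to Erd\H{o}s, Bollob\'as--Daykin--Erd\H{o}s, Huang--Loh--Sudakov, Frankl--R\"odl--Ruci\'nski, {\L}uczak--Mieczkowska, and Frankl. You correctly recognize that no proof exists and give an accurate overview of the standard attack (shifting to reduce to left-compressed families, induction on $s$ or $n$, stability against the cover construction for large $n/s$, and the fractional/probabilistic relaxation); this matches the literature the paper surveys and adds a reasonable explanation of why the crossover regime $n\approx ks$ resists these methods. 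Two small inaccuracies worth fixing: the Bollob\'as--Daykin--Erd\H{o}s range is $n>2k^3(s-1)$, not $n>2k^3 s$, and you might also record Frankl's result that the conjecture holds whenever $s\le n/(2k)$, which the paper cites explicitly. Since there is no proof in the paper to compare against, there is nothing further to reconcile.
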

The lower bound comes from \eqref{eq:H0} and
the $k$-graph consisting of a complete $k$-graph $K^k_{ks-1}$ on $ks-1$ vertices and $n-ks+1$ isolated vertices.

The $s=2$ case of Conjecture~\ref{conj:Erd} is the well-known Erd\H{o}s-Ko-Rado theorem \cite{EKR}.
A classic theorem of Erd\H{o}s and Gallai \cite{ErGa59} confirms the conjecture for $k=2$. Erd\H{o}s \cite{Erd65mat} proved the conjecture for $n\ge n_0(k, s)$. Bollob\'as, Daykin and Erd\H{o}s \cite{BDE} proved the conjecture for $n> 2k^3 (s-1)$ and Huang, Loh, and Sudakov \cite{HLS12} recently improved it to $n\ge 3k^2 s$.
When $k=3$, Frankl, R\"odl, and Ruci\'nski \cite{FRR12} proved the conjecture for $n\ge 4s$ while {\L}uczak and Mieczkowska \cite{LuMi} proved it for sufficiently large $s$. Recently Frankl \cite{Frankl12} proved the conjecture for $k=3$.  Frankl \cite{Frankl13} also proved the conjecture for $s\le n/(2k)$.

Alon et al. \cite{AFHRRS} considered a fractional version of the Erd\H{o}s conjecture.
Let $H=(V, E)$ be a $k$-graph on $n$ vertices. A \emph{fractional matching} in $H$ is a function $w: E\rightarrow [0,1]$ such that for each $v \in V$ we have $\sum _{e \ni v} w(e)\leq1$. The \emph{size} of $w$, denoted by $\nu^*(H)$, is $\sum _{e \in E} w(e)= \frac1{k}\sum_{v} \sum _{e \ni v} w(e) \le n/k$.
If $\nu^*(H) = n/k$ then we call $w$ a \emph{perfect fractional matching}.  Determining $\nu^*(H)$ is a linear programming problem. Its dual problem is finding a minimum \emph{fractional vertex cover} $\tau^*(H)= \sum_{v\in V} w(v)$ over all functions $w: V\rightarrow [0,1]$ such that $\sum_{v\in e} w(v) \ge 1$ for all $e\in E$. Correspondingly $\tau(H)$ is the minimum number of vertices in a vertex cover of $H$.

Given integers $n\ge k> d\ge 0$ (with $k\ge 2$) and a real number $0< s\le n/k$, define $\mathbf{f^s_d(k, n)}$ to be the smallest integer $m$ such that every $k$-graph $H$ on $n$ vertices with $\delta _d (H) \geq m$ contains a fractional matching of size $s$. We let $f_d(k, n):= f^{n/k}_d(k, n)$ (note that $n/k$ may not be an integer).
The hypergraph $H^0_{\lceil s \rceil}(n,k)$ defined in Construction~\ref{cons:ms} contains no fractional matching of size $s$ because
\[
\nu^*(H^0_{\lceil s \rceil}(n, k))= \tau^*(H^0_{\lceil s \rceil}(n, k)) \le \tau(H^0_{\lceil s \rceil}(n, k))= {\lceil s \rceil}-1<s.
\]
When $s$ is an integer, the complete $k$-graph $K^k_{ks-1}$ contains no fractional matching of size $s$.
When $s= n/k$, it was shown in \cite{RRS06mat} that $f_{k-1}(k, n) = \d_{k-1}(H^0_{\lceil n/k \rceil}(n, k)) + 1 =\lceil n/k \rceil$.  The following two conjectures were given in  \cite{AFHRRS}.
\begin{conjecture}\cite{AFHRRS}
\label{conj:fpm}
\begin{enumerate}[{\rm (i)}]
  \item For all $1\le d\le k-1$,
$\displaystyle
f_d(k, n)= \left(1-\left(\frac{k-1}{k}\right)^{k-d} + o(1) \right) \binom{n-d}{k-d}.
$
  \item For all positive integers $n, k, s$ such that $k\ge 2$ and $s\le n/k$,
  \[
  f^s_0(k, n)= \max \left\{ \binom{ks-1}{k}, \binom{n}{k}-\binom{ n-s +1}{k} \right\}+1.
  \]
\end{enumerate}

\end{conjecture}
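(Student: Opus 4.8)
The plan is to treat Conjecture~\ref{conj:fpm} by linear-programming duality for the matching LP, which converts both statements about $\nu^*$ into a single extremal question about real weight vectors. The first step records the translation. For a $k$-graph $H$ on vertex set $V$ and any fractional vertex cover $c\colon V\to[0,1]$, every edge of $H$ lies in the $k$-graph $H_c:=\{X\in\binom{V}{k}:\sum_{v\in X}c(v)\ge1\}$, and $c$ is also a fractional cover of $H_c$, so $\nu^*(H_c)=\tau^*(H_c)\le\|c\|_1$; conversely $\nu^*(H)=\tau^*(H)$ produces such a $c$ with $\|c\|_1=\nu^*(H)$. Hence for every real $\lambda$,
\[
\max\{\,\delta_d(H):|V(H)|=n,\ \nu^*(H)<\lambda\,\}=\max\{\,\delta_d(H_c):c\in[0,1]^n,\ \|c\|_1<\lambda\,\}.
\]
Taking $d=0$, $\lambda=s$ shows $f^s_0(k,n)-1$ is the largest number of $k$-subsets of weight $\ge1$ obtainable from a vector $c\in[0,1]^n$ with $\|c\|_1<s$; taking $\lambda=n/k$ shows $f_d(k,n)-1$ is the maximum over $c\in[0,1]^n$ with $\|c\|_1<n/k$ of $\min_{S\in\binom{V}{d}}\#\{Y\in\binom{V\setminus S}{k-d}:\sum_{v\in S}c(v)+\sum_{v\in Y}c(v)\ge1\}$.

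Next I would identify the candidate extremal vectors and check the lower bounds. For part (ii) there are two: the indicator $\mathbf 1_A$ of a set $A$ with $|A|=s-1$, which makes exactly the $\binom{n}{k}-\binom{n-s+1}{k}$ $k$-sets meeting $A$ heavy (this realizes $H^0_s(n,k)$ of Construction~\ref{cons:ms}), and the constant vector $c\equiv1/k$ on a set of $ks-1$ vertices, which makes the $\binom{ks-1}{k}$ $k$-sets inside that set heavy (this realizes $K^k_{ks-1}$ plus isolated vertices); Conjecture~\ref{conj:fpm}(ii) is exactly the assertion that no $c$ beats both. For part (i) only $\mathbf 1_A$ with $|A|=\lceil n/k\rceil-1$ is relevant, since for $d\ge1$ the clique-plus-isolated construction has minimum $d$-degree $0$: here the worst $d$-set $S$ is one disjoint from $A$, with count $\binom{n-d}{k-d}-\binom{n-d-|A|}{k-d}=\delta_d(H^0_{\lceil n/k\rceil}(n,k))$, the lower bound already noted after Construction~\ref{cons:ms}.

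The third step eliminates the inner $\min_S$ in part (i). In an optimal cover $c$ with $\|c\|_1<n/k$, averaging shows the $d$ lightest vertices have total weight $<d/k$; letting $S$ be those vertices forces every edge through $S$ in $H_c$ to contain a $(k-d)$-subset of weight $>(k-d)/k$ among the remaining $N:=n-d$ vertices, whose weights still sum to less than $n/k$. Thus
\[
f_d(k,n)-1\le\max\Big\{\#\Big\{Y\in\binom{[N]}{k-d}:\sum_{v\in Y}c'(v)>\tfrac{k-d}{k}\Big\}:c'\in[0,1]^N,\ \|c'\|_1<\tfrac{n}{k}\Big\},
\]
with the matching lower bound from $H^0_{\lceil n/k\rceil}(n,k)$. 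Both parts now read as one template: \emph{maximize the number of $j$-subsets of weight exceeding a threshold $\theta$, over $c\in[0,1]^N$ with $\|c\|_1\le L$}, where $(j,\theta,L)=(k,1,s)$ for (ii) and $(k-d,(k-d)/k,n/k)$ for (i). Partial results already cover parts of this: the case $d=k-1$ of (i) is known exactly \cite{RRS06mat}, \cite{AFHRRS} handles the range $d\ge k-4$, and the corresponding ranges of (ii) follow from known cases of Conjecture~\ref{conj:Erd} (e.g.\ $n\ge 3k^2s$ via \cite{HLS12}, or $k=3$ via \cite{Frankl12}).

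The hard part will be proving the template in general. The naive compression—shift weight from a lighter coordinate $u$ to a heavier coordinate $v$ until one of them hits $0$ or $1$—pushes $c$ towards a $0/1$ vector but need not preserve the count of heavy $j$-sets: a $j$-set whose weight sits just above $\theta$ only because it uses $u$ gets destroyed, with no compensating $j$-set guaranteed in exchange, so monotonicity fails. A correct argument must be more global. One route is induction on $n$ by deleting a judiciously chosen vertex (a heaviest one, or one of weight $0$) and controlling how $\|c\|_1$ and the count of heavy sets change, in the spirit of the proofs of Conjecture~\ref{conj:Erd} by Erd\H{o}s, Bollob\'as--Daykin--Erd\H{o}s and Frankl \cite{BDE,Frankl12}. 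A second, especially for (ii), is to use complementary slackness to place the support of $c$ on the at most $k\,\nu^*(H)<ks$ saturated vertices of an optimal fractional matching, which bounds the support and may make a stability argument feasible. A third is to look for a ``fractional Kruskal--Katona'' inequality controlling, for fixed $c$, the number of heavy $j$-sets in terms of the number of heavy $(j-1)$-sets. In every approach the crossover $n\approx ks$ in part (ii) will need separate care, just as for the integer conjecture. Finally, progress here propagates: the asymptotic identity \eqref{eq:md}, hence part of Conjecture~\ref{conj:ms}, follows from Conjecture~\ref{conj:fpm}(i), so the template feeds straight back into the matching problems of Sections~\ref{ss:1}--\ref{ss:2}.
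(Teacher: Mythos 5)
The statement you were given is a \emph{conjecture}: the paper records it as an open problem from \cite{AFHRRS} and offers no proof, so there is no ``paper proof'' to match your argument against, and you have correctly treated it as an attack plan rather than claiming a proof. What can be checked is whether your reductions are sound, and they are. The LP-duality dictionary $H\leftrightarrow H_c$ is correct (one direction uses that an optimal cover $c$ of $H$ is also a cover of $H_c\supseteq H$ with $\|c\|_1=\nu^*(H)$; the other that $\tau^*(H_c)\le\|c\|_1$), the two candidate extremal vectors for (ii) and the single one for (i) do realize the constructions behind the lower bounds in the paper, and the averaging step that discards $\min_S$ in (i) is a valid upper-bound reduction (it is asymptotically tight only because the conjectured optimum $\mathbf 1_A$ places weight $0$ on $d$ vertices; your relaxed budget $\|c'\|_1<n/k$ rather than $\|c\|_1-\sum_S c$ would lose sharp constants if the true optimum did not have $d$ light vertices, but it does not hurt the $o(1)$ statement).

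Worth pointing out explicitly: the ``template'' you arrive at --- maximize the number of $j$-subsets of weight exceeding a threshold over $c\in[0,1]^N$ with bounded $\|c\|_1$ --- is precisely the combinatorial face of the route the paper cites. Alon et al.\ prove (ii) asymptotically for $k\in\{3,4\}$ by reducing it to Samuels' probabilistic conjecture on $\Pr\bigl[\sum X_i\ge 1\bigr]$ for nonnegative random variables of fixed mean, and then feed that into (i) for $k-d\in\{3,4\}$; replacing the i.i.d.\ ensemble by the empirical weight vector $c$ and conditioning on a random $j$-subset turns Samuels' tail question into exactly your extremal problem. So your reformulation is not a new avenue but a restatement of the known one, and the approaches you then list (induction in the style of Erd\H{o}s/BDE/Frankl, support-localization via complementary slackness, a ``fractional Kruskal--Katona'') are candidate tools, none of which is known to close the general case. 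Your observation that naive compression destroys heavy sets and hence fails is correct and is the genuine obstruction. Two small inaccuracies: (a) the paper credits \cite{AFHRRS} with the new cases $k-d\in\{3,4\}$, not the whole range $d\ge k-4$ (the cases $k-d\le 2$ are earlier and easier); (b) after your reduction of (i), both the threshold and the budget are rational multiples of $n$ that you have rounded in the direction favorable to the bound, which is fine for the $o(1)$ statement but would need care if one tried to push this to an exact value of $f_d(k,n)$.
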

Alon et al.  \cite{AFHRRS} first proved Conjecture~\ref{conj:fpm} (ii) asymptotically for $k\in \{3, 4\}$ by reducing it to an old probabilistic conjecture of Samuels \cite{Sam} and then they applied this result to prove (i) with $k-d\in \{3, 4\}$.

One can convert an almost perfect fractional matching to an almost perfect integer matching by applying either a result of Frankl and R\"odl \cite{FrRo85} or the weak Regularity Lemma (see \cite[Section 4]{AFHRRS} and \cite[Section 5]{KOTo} for details of these two approaches). This implies the following connection between $m_d(k, n)$ and $f_d(k, n)$.
\begin{theorem}\cite{AFHRRS}
\label{thm:afh}
For $1\le d\le k-1$ if there exists $c>0$ such that $f_d(k, n)=( c+ o(1)) \binom{n-d}{k-d}$ then
 \begin{equation*}
m_{d}(k, n)= \left( \max \left \{ \frac12, \ c \right \} + o(1) \right) \binom{n-d}{k-d}.
\end{equation*}
\end{theorem}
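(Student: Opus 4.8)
The plan is to prove the two inequalities $m_d(k,n) \ge (\max\{1/2,c\} - o(1))\binom{n-d}{k-d}$ and $m_d(k,n) \le (\max\{1/2,c\} + o(1))\binom{n-d}{k-d}$ separately. The lower bound I would obtain from two extremal families: on one hand the divisibility barriers of Construction~\ref{cons:md} give $m_d(k,n) > \delta(n,k,d) = (1/2 + o(1))\binom{n-d}{k-d}$; on the other hand, since $f_d(k,n) = (c+o(1))\binom{n-d}{k-d}$, for each large $n \in k\mathbb{N}$ there is a $k$-graph $H_0$ on $n$ vertices with $\delta_d(H_0) \ge (c - o(1))\binom{n-d}{k-d}$ and no perfect fractional matching, hence no perfect (integer) matching, so $m_d(k,n) > \delta_d(H_0) \ge (c-o(1))\binom{n-d}{k-d}$. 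Taking the larger of the two bounds finishes this direction.

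For the upper bound I would fix $\gamma > 0$, set $a = \max\{1/2,c\}$, and start from a $k$-graph $H$ on $n \in k\mathbb{N}$ vertices (large) with $\delta_d(H) \ge (a+2\gamma)\binom{n-d}{k-d}$. Since $a + 2\gamma \ge \frac12 + 2\gamma$, Lemma~\ref{lem:HPS} yields an absorbing matching $M$ of size $\gamma^k n/k$ that absorbs any $W \subseteq V(H)\setminus V(M)$ with $|W| \in k\mathbb{N}$ and $|W| \le \gamma^{2k}n$. Next I would pass to $H' = H[V(H)\setminus V(M)]$ on $n' = (1-\gamma^k)n$ vertices; a routine count of the edges through a fixed $d$-set that meet $V(M)$ shows each such $d$-set loses at most $(k-d)\gamma^k(1+o(1))\binom{n-d}{k-d}$ edges, so for $\gamma$ small and $n$ large one has $\delta_d(H') \ge (c+\gamma)\binom{n'-d}{k-d} > f_d(k,n')$, whence $H'$ has a perfect fractional matching. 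I would then convert this into an integer matching $M''$ of $H'$ covering all but $o(n')$ vertices, using the Frankl--R\"odl theorem \cite{FrRo85} or the weak regularity lemma as recalled just above the theorem. Since the number of uncovered vertices is $o(n') \le \gamma^{2k}n$ for $n$ large and $|V(M)|, |V(M'')| \in k\mathbb{N}$, the leftover set $W = V(H)\setminus(V(M)\cup V(M''))$ satisfies $|W| \in k\mathbb{N}$ and $|W| \le \gamma^{2k}n$; thus $M$ absorbs $W$, i.e.\ there is a matching $M^*$ with $V(M^*) = V(M)\cup W$, and $M^* \cup M''$ is a perfect matching of $H$. Letting $\gamma \to 0$ gives $m_d(k,n) \le (a+o(1))\binom{n-d}{k-d}$.

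The hard part is the conversion from a perfect fractional matching on $H'$ to a near-perfect integral matching: a fractional perfect matching alone does not force a large integral matching, so this step must lean on the near-perfect matching machinery — either extracting, via the weak regularity lemma, dense regular $k$-partite pieces on which the fractional weight concentrates and building integral matchings there, or reducing $H'$ to an almost-regular $k$-graph of negligible codegree and invoking Frankl--R\"odl. The delicate bookkeeping is keeping the accumulated $o(n)$ error terms small enough that the leftover set stays absorbable and of size divisible by $k$; everything else is a direct assembly of Lemma~\ref{lem:HPS}, the hypothesis on $f_d$, and the two extremal constructions.
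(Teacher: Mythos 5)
Your overall architecture follows the paper's own sketch closely: divisibility barrier plus extremal fractional-matching configuration for the lower bound, then the H\`an--Person--Schacht absorbing lemma (Lemma~\ref{lem:HPS}) to reduce the upper bound to an ``almost perfect matching lemma'', which you try to supply from the fractional-matching hypothesis. The lower-bound paragraph and the degree bookkeeping after removing $V(M)$ are both correct.

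The gap is in how the fractional-matching hypothesis is actually used. You conclude that $\delta_d(H')>f_d(k,n')$, extract a perfect fractional matching \emph{of $H'$ itself}, and then try to ``convert'' it; you rightly flag that a single hypergraph with a perfect fractional matching need not have a large integral matching, but the fix you sketch --- that the fractional weight of $H'$ ``concentrates'' on regular pieces, or that sampling edges with probability proportional to the weight yields an almost-regular sub-hypergraph for Frankl--R\"odl --- does not work. The fractional weight is only constrained to give each vertex total weight $1$, which places no useful upper bound on weighted codegrees, and there is no reason for it to align with a regularity partition (the Fano plane already shows $\nu^*$ and $\nu$ can be wildly different). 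The correct use of the hypothesis, as in \cite[Section 4]{AFHRRS} and \cite[Section 5]{KOTo}, is not to $H'$ but to the \emph{reduced hypergraph} $R$ produced by the weak regularity lemma: $R$ inherits (approximately) the minimum $d$-degree condition of $H'$, its number of clusters $t$ can be taken large enough that the asymptotic statement $f_d(k,t)=(c+o(1))\binom{t-d}{k-d}$ kicks in, and the resulting fractional matching of $R$ is then blown up to an integral matching of $H'$ edge by edge inside the dense $\eps$-regular $k$-partite pieces (an edge of $R$ with weight $w$ contributes an integral matching of size roughly $w$ times the cluster size). The perfect fractional matching of $H'$ that you produce is never used and cannot be used in this way; the quantity that does the work is the fractional matching of $R$. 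Your proposal as written does not reach this point, and without it the step ``$H'$ has a perfect fractional matching $\Rightarrow$ $H'$ has a near-perfect integral matching'' is simply false.
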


This theorem implies that to determine $m_d(k, n)$, it suffices to prove Conjecture~\ref{conj:fpm} (i).
Theorem~\ref{thm:afh} was slightly improved in \cite{TrZh15} to
 \begin{equation*}
m_{d}(k, n)= \max \left \{ \delta (n,k,d)+1, \ (c  + o(1)) \binom{n- d}{k- d} \right \},
\end{equation*}
where $\delta(n, k, d)$ was defined in Section~\ref{ss:1}.

\subsection{Other Matching problems}

First we consider matching problems in multipartite hypergraphs. Let $\K_k(n)$ denote the family of all $k$-partite $k$-graphs $H$ with parts $V_1, \cdots, V_k$ such that $|V_1| = \cdots = |V_k|=n$. 
Given $I\subseteq [k]$, a vertex set  $S$ is \emph{$I$-crossing} if $|S\cap V_i|=1$ for $i\in I$ and $S\cap V_i= \emptyset$ otherwise. Let $\d_I(H)$ be the minimum of $\deg(S)$ over all $I$-crossing sets $S$ and let $\d'_d(H) := \min \d_I(H)$ over all $I\subseteq [k]$ of size $d$.
K\"uhn and Osthus \cite{KO06mat} proved that every $H\in \K_k(n)$ contains a perfect matching if $\d'_{k-1}(H)\ge n/2 + \sqrt{2n \log n}$. Aharoni, Georgakopoulos, and Spr\"ussel \cite{AGS} found an elegant proof of the following theorem.
\begin{theorem}
\label{thm:AGS}\cite{AGS}
Let $H\in \K_k(n)$. If $\deg(S)> n/2$ for all $[k-1]$-crossing sets $S$ and $\deg(S)\ge n/2$ for all $\{2, 3, \dots, k\}$-crossing sets $S$, then $H$ contains a perfect matching.
\end{theorem}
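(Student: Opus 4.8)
The natural approach is induction on $k$, with Hall's theorem as the engine. For the \textbf{base case} $k=2$ the hypothesis reads $\deg(v)>n/2$ for every $v\in V_1$ and $\deg(v)\ge n/2$ for every $v\in V_2$. To verify Hall's condition for a matching saturating $V_1$, take $\emptyset\neq A\subseteq V_1$: if $N(A)=V_2$ there is nothing to prove, and otherwise some $b\in V_2$ has no neighbour in $A$, so $\deg(b)\le n-|A|$ and hence $|A|\le n-\deg(b)\le n/2$, while any $a\in A$ gives $|N(A)|\ge\deg(a)>n/2\ge|A|$. Thus $H$ has a matching saturating $V_1$, which is perfect since $|V_1|=|V_2|=n$.

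For the \textbf{inductive step} ($k\ge 3$) I would reduce the problem to a rainbow matching question via links. For each $v\in V_1$ let $L_v$ be the $(k-1)$-partite $(k-1)$-graph with parts $V_2,\dots,V_k$ whose edges are the $\{2,\dots,k\}$-crossing $(k-1)$-sets $f$ with $f\cup\{v\}\in E(H)$; then a perfect matching of $H$ is exactly a choice of pairwise disjoint $f_v\in L_v$, one for each $v\in V_1$, covering $V_2\cup\dots\cup V_k$, that is, a \emph{rainbow perfect matching} of the family $(L_v)_{v\in V_1}$. Two properties pass down from the hypothesis: (i) since $\deg_H(S)>n/2$ for every $[k-1]$-crossing set $S$, writing $S=S'\cup\{v\}$ with $v\in V_1$ gives $\deg_{L_v}(S')=\deg_H(S)>n/2$ for every $(k-2)$-set $S'$ crossing $V_2,\dots,V_{k-1}$ and every $v\in V_1$, so each $L_v$ satisfies the ``strict half'' of the asymmetric degree condition with $V_2$ now in the role of $V_1$; and (ii) every $\{2,\dots,k\}$-crossing $(k-1)$-set $T$ lies in exactly $\deg_H(T)\ge n/2$ of the hypergraphs $L_v$.

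To finish I would establish a \textbf{rainbow matching lemma}: a family of $n$ such $(k-1)$-partite $(k-1)$-graphs on common parts of size $n$, each obeying (i) and jointly obeying (ii), has a rainbow perfect matching. The plan is an augmenting argument: start from a largest partial rainbow matching, say indexed by $A\subsetneq V_1$ and covering a set $W$, pick $v\in V_1\setminus A$, and try to extend or reroute using the uncovered vertices; property (i) supplies strong codegrees among those vertices, while (ii) guarantees that many links still admit a suitable edge on them, so that a deficiency version of Hall's theorem — or a recursive application of Theorem~\ref{thm:AGS} in dimension $k-1$ to the sub-hypergraph induced on the uncovered parts of a well-chosen link — closes the gap. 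The \textbf{main obstacle} is precisely the bookkeeping in this last step: restricting a link to the $n-|A|$ uncovered vertices per part weakens its codegrees by up to $|A|$, which on its own falls just short of the threshold $(n-|A|)/2$, so the argument must use the abundance guaranteed by (ii) to compensate for this defect; getting that balance exactly right, uniformly over all sizes of the partial rainbow matching, is the crux of the proof.
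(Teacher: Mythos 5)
Your base case is correct, and the reformulation of the inductive step as a rainbow perfect matching problem for the links $(L_v)_{v\in V_1}$, together with the properties (i) and (ii) you isolate, is an accurate bookkeeping of what the hypotheses give. But the proof stops there: the rainbow matching lemma, which carries the entire content of the induction, is stated as a goal and then discussed rather than proved. Your final paragraph is explicit that the obvious estimates do not close, so what you have is a plan with a known hole, not an argument.

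Two concrete obstructions confirm the hole is real. First, a ``recursive application of Theorem~\ref{thm:AGS} in dimension $k-1$'' to a single link $L_v$ cannot be made to work as stated, because $L_v$ inherits only one side of the asymmetric hypothesis: property (i) gives $\deg_{L_v}(S')>n/2$ for $\{2,\dots,k-1\}$-crossing $S'$, but the companion lower bound on $\{3,\dots,k\}$-crossing tuples in $L_v$ would require control of $\deg_H$ on $([k]\setminus\{2\})$-crossing sets, which the theorem's hypothesis simply does not address. Second, as you note yourself, after deleting an $|A|$-vertex partial rainbow matching the surviving codegree in a link is only about $n/2-|A|$, whereas the inductive hypothesis at part-size $n-|A|$ would demand about $(n-|A|)/2$; the shortfall is $|A|/2$ and grows with $|A|$, so this is not a boundary technicality but a genuine deficiency that property (ii) would somehow have to repair, and you do not show how. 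For what it is worth, the survey does not reprove Theorem~\ref{thm:AGS} but only cites \cite{AGS}; the ``elegant proof'' referred to there is a short direct argument, and you should not assume it is recoverable by a $k$-induction through links without a new idea for the rainbow step.
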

This theorem implies that $H$ contains a perfect matching if $\d'_{k-1}(H)> n/2$ for \emph{all} values of $n$. This is tight when $k$ is even and $n=2 \pmod 4$ but may not be tight in other cases (off by at most one).
Pikhurko \cite{Pik} proved that every $H\in \K_k(n)$ contains a perfect matching if there exists nonempty $L\subsetneq [k]$ such that
\[
\frac{\d_L(H)}{n^{k-|L|}} + \frac{\d_{[k]\setminus L}(H)}{n^{|L|}}= 1 + \Omega\left(\sqrt{\tfrac{\log n}{n}}\right).
\]
This implies that $H$ contains a perfect matching if $\d'_d(H)\ge (1/2 + o(1))n^{k-d}$ for some $d\ge k/2$.
Recently Lo and Markstr\"om \cite{LoMa-3p} determined the minimum $\d'_1(H)$ that guarantees a perfect matching when $H\in \K_3(n)$ and $n$ is sufficiently large.

\medskip

Keevash and Mycroft \cite{KeMy1} investigated the structure of $k$-graphs satisfying $\d_{k-1}(H)\ge n/k - o(n)$ but containing no perfect matching. They showed that such $k$-graphs are close to either space or divisibility barriers (see Theorem~\ref{thm:KM} in the next section). Applying the results in \cite{KeMy1}, Keevash, Knox, and Mycroft recently \cite{KKM15} found a necessary and sufficient condition for the existence of a perfect matching in all $k$-graphs $H$ with $\d_{k-1}(H)\ge n/k + o(n)$.

Given $k\ge 3$ and $0\le c\le 1$, let \textbf{PM}$(k, c)$ denote the decision problem of determining whether a $k$-graph $H$ contains a perfect matching when $\d_{k-1}(H) \ge cn$. The result of Karp \cite{Karp} says that \textbf{PM}$(k,0)$ is NP-complete. On the other hand, Theorem~\ref{thm:RRS} implies that
\textbf{PM}$(k, 1/2)$ can be decided in constant time.
Szyma\'nska \cite{Szy13} proved that for $c < 1/k$ the problem \textbf{PM}$(k,0)$ admits a polynomial-time reduction to \textbf{PM}$(k, c)$ and hence \textbf{PM}$(k, c)$ is also NP-complete. Karpi\'nski, Ruci\'nski and Szyma\'nska \cite{KRS10} showed that there exists $\eps > 0$ such that \textbf{PM}$(k, 1/2-\eps)$ is in P.  Applying the aforementioned structure result, Keevash, Knox, and Mycroft \cite{KKM15} proved that \PM$(k, 1/k + \eps)$ is in P for all $\eps>0$ (furthermore, their algorithm provides a perfect matching if it exists). Very recently Han \cite{Han14_Poly} solved the remaining case \PM$(k, 1/k)$.
In fact, he proved that \PM$(k, c)$ is in P for all $c\ge 1/k$ by using some theory developed in \cite{KKM15} and a lattice-based absorbing method.

\section{Packing Problems}

\subsection{Results}
\emph{(Hyper)graph packing}, alternatively called \emph{(hyper)graph tiling}, is a natural extension of the matching problem. Given two $k$-graphs $H$ and $F$, an \emph{$F$-packing} is a sub(hyper)graph of $H$ that consists of vertex-disjoint copies of $F$. An $F$-packing is \emph{perfect} (called an $F$-\emph{factor}) if it is a spanning sub(hyper)graph of $H$. In this case it is necessary that $|V(F)|$ divides $|V(H)|$.

Packing problems have been studied extensively for graphs. The celebrated Hajnal-Szemer\'edi theorem \cite{HaSz} states that for all $n\in t\mathbb{N}$ every $n$-vertex graph with minimum degree at least $(1-1/t)n$ contains a $K_t$-factor.
Given a $k$-graph $F$ of order $f$ and an integer $n$ divisible by $f$, we define \emph{the $F$-packing threshold} $\mathbf{\delta_d(n,F)}$  as the smallest integer $t$ such that every $n$-vertex $k$-graph $H$ with $\d_{d}(H)\ge t$ contains an $F$-factor. We simply write $\d(n, F)$ for $\d_{k-1}(n, F)$.
When $k=2$, K\"{u}hn and Osthus \cite{KuOs09} determined $\d(n,F)$ up to an additive constant for all graphs $F$. This improves the earlier results of Alon and Yuster \cite{AY96}, Koml\'os \cite{Kom}, and Koml\'os, S\'ark\"ozy, and Szemer\'edi \cite{KSS-AY}. For more results on graph packing, see the survey \cite{KuOs-survey}.

It is not surprising that packing problems become harder in hypergraphs. Other than the matching problems mentioned in Section~1, only a few packing thresholds are known. Recall that $K_t^k$ is the complete $k$-graph on $t$ vertices. The first step towards a hypergraph Hajnal-Szemer\'edi theorem is determining $\d(n, K_4^3)$ ($K_4^3$-packing is also interesting because the corresponding Tur\'an problem is a famous conjecture of Tur\'an \cite{Turan}). Czygrinow and Nagle \cite{CzNa} showed that $\d(n, K^3_4)\ge 3n/5 + o(n)$. Keevash and Sudakov observed that $\d(n, K^3_4)\ge 5n/8 + o(n)$. Pikhurko \cite{Pik} proved $3n/4 -2 \le \d(n, K^3_4)\le 0.861 n$; independently Keevash and the author (unpublished, see \cite{KeMy1}) proved that $2n/3 -1\le \d(n, K^3_4)\le 4n/5 + o(n)$. Lo and  Markstr\"om~\cite{LoMa-fa} showed that $\d(n, K^3_4)= 3n/4 + o(n) $ by the absorbing method. Independently and simultaneously Keevash and Mycroft \cite{KeMy1} determined $\d(n, K^3_4)$ \emph{exactly}.
\begin{theorem}\cite{KeMy1}
For sufficiently large $n\in 4\mathbb{N}$,
\[
\d(n, K_4^3) =
\begin{cases}
3n/4 -2 & \text{if $n\in 8\mathbb{N}$}  \\
3n/4 - 1 & \text{otherwise.}
\end{cases}
\]
\end{theorem}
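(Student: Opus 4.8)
The strategy I would follow combines the absorbing method with a stability (extremal versus non‑extremal) dichotomy, the goal being to upgrade the asymptotic bound $\delta(n,K_4^3)=3n/4+o(n)$ of Lo and Markstr\"om~\cite{LoMa-fa} to the exact value. For the lower bound I would first record the \emph{divisibility‑barrier} constructions that force it. These are built from a nearly balanced partition of $V$ into four parts, with the part sizes perturbed slightly away from $n/4$ and a triple declared to be an edge according to the sizes of its intersections with the parts, chosen so that the index vector of every copy of $K_4^3$ is confined to a proper sublattice of the lattice of sum‑zero integer vectors indexed by the parts, while the vector of part sizes does not lie in the relevant coset; any such $H$ has no $K_4^3$‑factor. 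One then optimizes the part sizes and computes the minimum codegree of the best such construction. The point of the case distinction is that how large one can make the ``overfull'' part while still defeating divisibility depends on the parity of $n/4$: this gives $\delta_2=3n/4-3$ when $n\in 8\mathbb N$ and $\delta_2=3n/4-2$ otherwise, yielding $\delta(n,K_4^3)\ge 3n/4-2$ (resp.\ $3n/4-1$), which is Pikhurko's bound~\cite{Pik}.

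For the upper bound, fix $0<\xi\ll\gamma\ll\eps$ and suppose $\delta_2(H)\ge 3n/4-1$. I would prove a refined $K_4^3$‑absorbing lemma, in the spirit of Theorem~\ref{thm:TZ3}: either $H$ is $\eps$‑close to one of the barriers above, or $H$ contains a $K_4^3$‑packing $M$ with $|V(M)|\le\xi n$ that absorbs any $W\subseteq V(H)\setminus V(M)$ with $|W|\in 4\mathbb N$ and $|W|\le\xi^2 n$; the absorbing gadgets can be found because, away from the barriers, every small vertex set lies in many copies of $K_4^3$ in many interchangeable ways, so a random sub‑collection works. In the non‑extremal case (i.e.\ $H$ is not $\eps$‑close to a barrier) I would then remove $V(M)$ and apply an almost‑$K_4^3$‑factor lemma --- which holds for every $H'$ with $\delta_2(H')\ge(3/4-o(1))|V(H')|$, following the relevant part of \cite{LoMa-fa} (or via the weak regularity lemma) --- to tile all but $\le\xi^2 n$ vertices of $H-V(M)$, and finally let $M$ absorb the leftover; this produces a $K_4^3$‑factor.

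It remains to treat $H$ that is $\eps$‑close to a barrier $B^\ast$. Since every divisibility barrier has minimum codegree at most $3n/4-2$ (and at most $3n/4-3$ when $n\in 8\mathbb N$), the hypothesis $\delta_2(H)\ge 3n/4-1$ (resp.\ $\ge 3n/4-2$) means $H$ cannot itself conform to any such lattice structure: a counting argument forces a controlled number of ``exceptional'' edges, i.e.\ edges whose index vectors violate the nearby structure, and they are distributed densely enough to be useful. I would use a bounded number of copies of $K_4^3$ through such exceptional edges to correct the divisibility defect, and then complete a $K_4^3$‑factor of the remaining hypergraph, which is now near‑complete and divisibility‑consistent, by a greedy/near‑perfect‑matching argument. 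Conversely, the barrier constructions show no smaller degree works, and matching the exact additive term to the optimal construction is precisely what produces the $n\bmod 8$ dichotomy. (Alternatively one can replace this bespoke stability step by Keevash and Mycroft's unified structural theorem for $k$‑graphs of codegree $\ge n/k+o(n)$, which directly outputs the ``space or divisibility barrier'' alternative used in~\cite{KeMy1}.)

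I expect two points to carry the real difficulty. The first is obtaining the refined absorbing lemma with the constant driven all the way down to $3n/4$, which requires showing that non‑barrier hypergraphs robustly contain many copies of $K_4^3$ through every small vertex set --- this is where the combinatorics specific to $K_4^3$ (and the absence of a clean Hajnal--Szemer\'edi analogue for $3$‑graphs) enters. The second is the exact extremal analysis: one must classify all near‑extremal $H$, describe their exceptional‑edge structure precisely, and optimize the part sizes carefully enough to pin down the additive constant and the $n\in 8\mathbb N$ versus $n\notin 8\mathbb N$ split.
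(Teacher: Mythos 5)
Your proposal is plausible as a blueprint, but it does not match the route actually taken in~\cite{KeMy1}, which is the paper cited for this theorem. Keevash and Mycroft do not prove a bespoke ``absorbing or $\eps$-close to a barrier'' lemma for $K_4^3$. Instead, they first reduce the $K_4^3$-tiling problem to a perfect-matching problem in a simplicial complex: given $H$ with $\delta_2(H)\ge 3n/4$, form the \emph{clique $4$-complex} $J$ with $J_i=\binom{V}{i}$ for $i\le 2$, $J_3=H$, and $J_4$ equal to the family of $4$-sets spanning a copy of $K_4^3$. One checks $\mathbf{d}(J)\ge(n,\,n-1,\,\tfrac34 n,\,\tfrac{n}{4}+3)$, so the hypothesis \eqref{eq:dJ} of their general structural Theorem~\ref{thm:KM} (proved via hypergraph regularity and a blow-up/absorption argument inside the complex) holds, and the conclusion is that either $J_4$ has a perfect matching --- i.e.\ $H$ has a $K_4^3$-factor --- or $J_4$ exhibits a space or divisibility barrier (a set $S$ of size $jn/4$ that almost all $J_4$-edges meet in $\le j$ vertices, or a partition $\P$ with $L_{\P}^{\mu}(J_4)$ incomplete and transferral-free). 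The remaining work is to analyze these two barrier alternatives directly under the exact codegree hypothesis and show a $K_4^3$-factor still exists, optimizing to produce the $n\bmod 8$ split. Your approach --- a refined absorbing lemma \`a la Theorem~\ref{thm:TZ3}, an almost-$K_4^3$-tiling lemma, and a hands-on extremal case analysis --- is the Lo--Markstr\"om absorbing route upgraded to an exact result and is a legitimate alternative (indeed you flag the complex-based route in your final parenthetical), but it trades the uniform, reusable machinery of Theorem~\ref{thm:KM} (which Mycroft later applied to all complete $k$-partite $k$-graphs and to loose cycles) for a proof tailored to $K_4^3$. One caution: your sketch of the lower-bound constructions (``four parts perturbed from $n/4$'') is hand-wavy and does not obviously yield codegree $3n/4-O(1)$; to make the lower bound precise you really need to write out Pikhurko's construction and re-optimize the part sizes to extract the $n\in 8\mathbb N$ versus $n\notin 8\mathbb N$ distinction, rather than attributing both exact constants to~\cite{Pik}, which only establishes $\delta(n,K_4^3)\ge 3n/4-2$.
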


We will elaborate on the approaches used in \cite{LoMa-fa, KeMy1} in the next subsection.
It is desirable to find the packing threshold $\d(n, K_t^k)$ for all $3\le k< t$. However, at present such a hypergraph Hajnal-Szemer\'edi theorem seems out of reach. When $t=k+1$, Lo and Markstr\"om~\cite{LoMa-fa} showed that $\d(n, K_{k+1}^k)\le (1 - 1/2k)n$ for $k\ge 3$. It is plausible that one can prove $\d(n, K^k_{k+1}) \le \frac{k}{k+1}n + o(n)$ by applying the approach of \cite{KeMy1}. Unfortunately we do not know a matching lower bound (it was shown in \cite{LoMa-fa} that $\d(n, K_{k+1}^k)\ge 2n/3$ for even $k$).
For arbitrary $t$, it was shown in \cite{LoMa-fa} that
\[
\left( 1 - \frac{193\log (t-1)}{(t-1)^2}\right)n \le \d(n, K_t^3) \le \left( 1 - \frac{2}{t^2 - 3t + 4} + o(1) \right)n
\]
and $\d(n, K_t^k)\le (1 - \binom{t-1}{k-1}^{-1} + o(1))n$ for $k\ge 6$ and $t\ge (3+\sqrt{5})k/2$.
\begin{problem}
\begin{enumerate}
  \item Prove or disprove that $\d(n, K^k_{k+1}) = \frac{k}{k+1}n + o(n)$ for $k\ge 4$.
  \item Improve the existing bounds for $\d(n, F)$ when $F=K_t^3$, $t\ge 5$ and $F=K_t^k$, $4\le k<t$.
\end{enumerate}
\end{problem}

Let $K_4^-$ be the (unique) 3-graph with four vertices and three edges. Lo and Markstr\"om~\cite{LoMa-k4} showed that $\d(n, K^-_4)= n/2 + o(n) $ by using the absorbing method. 

All the remaining success was on packing with $k$-partite $k$-graphs. Given positive integers $m_1\le \cdots \le m_k$, let $K^k_{m_1, \dots, m_k}$ denote the complete $k$-partite $k$-graph with parts of sizes $m_1, \dots, m_k$. In particular, let $K^k_k(m)= K^k_{m, \dots, m}$. It is clear that $\d_d(n, K^k_k(m))\ge m_d(k, n)$ but it is possible to have $\d_d(n, K^k_{m_1, \dots, m_k})< m_d(k, n)$ for certain $m_1, \dots, m_k$.
Other than the matching problems, perhaps the earliest result on hypergraph packing was on $K^3_{1,1,2}-packing$ (note that $K^3_{1,1,2}$ is the unique 3-graph with four vertices and 2 triples).
As a corollary of their main result on loose Hamilton cycles, K\"{u}hn and Osthus \cite{KuOs-hc} proved that $\d(n, K^3_{1,1,2})= n/4+ o(n)$. Recently Czygrinow, DeBiasio, and Nagle \cite{CDN} determined this threshold exactly for sufficiently large $n$.

Mycroft \cite{Myc14} recently determined the codegree packing thresholds asymptotically for all complete $k$-partite $k$-graphs, as well as a large class of non-complete $k$-partite $k$-graphs.
Given a complete $k$-partite $k$-graph $K= K^k_{m_1, \dots, m_k}$, define $\gcd(K)$ to be $\gcd\{ m_j - m_i: 1\le i<j\le k\}$ and $\sigma(K)= m_1/(m_1 + \dots + m_k)$. We say that $K$ is type 0 if $\gcd(m_1, \dots, m_k)> 1$ or all $m_i =1$; $K$ is type $d\ge 1$ if $\gcd(m_1, \dots, m_k)= 1$ and $\gcd(K)=d$. It was shown in \cite{Myc14} that
\begin{equation*}
\d(n, K) =
\begin{cases}
n/2 + o(n) & \text{if $K$ is type 0;}  \\
\sigma(K)n + o(n) & \text{if $K$ is type 1;} \\
\max \{ \sigma(K)n, n/p \} + o(n) & \text{if $K$ is type $d\ge 2$,}
\end{cases}
\end{equation*}
where $p$ is the smallest prime factor of $d$. Mycroft \cite{Myc14} also answered a question of R\"odl and Ruci\'nski \cite{RR-survey} by determining $\d(n, C^k_s)$ asymptotically for all $k\ge 3$ and $s\ge 2$, where $C^k_s$ is a $k$-uniform loose cycle with $s$ edges (see Section~\ref{sec:hc} for its definition). The proof of \cite{Myc14} makes use of hypergraph regularity, including the Regularity Approximation Lemma of R\"odl and Schacht \cite{RS} and the Blow-up Lemma of Keevash \cite{Ke-bu}.

Let us consider hypergraph packing under vertex degree conditions.\footnote{We are not aware of any packing results on $d$-degree conditions for $1<d<k-1$.} Very little is known beyond the matching problem. Lu and Sz\'ekely \cite{LuSz} used the Local Lemma to derive a general upper bound for $\d_1(n, F)$ for arbitrary $k$-graph $F$.
\begin{theorem}\cite{LuSz}
\label{thm:LS}
Let $F$ be a $t$-vertex $m$-edge $k$-graph in which each edge intersects at most $d$ other edges. Then \begin{equation*}
    \d_1(n, F) \le \left( 1 - \frac{1}{e(d+1+ \frac{m}{t}k^2)} \right)\binom{n-1}{k-1}, \quad
    \text{where } e= 2.718....
\end{equation*}
\end{theorem}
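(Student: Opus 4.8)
The plan is to run the Lovász Local Lemma in the space of random injections, in the spirit of Lu and Székely, with the blocks of a random partition serving as the target copies of $F$. Write $t=|V(F)|$ and assume $t\mid n$ (necessary for an $F$-factor). Fix a labelling of $V(F)$ by $[t]$, partition the positions $[n]$ into $n/t$ consecutive blocks $B_1,\dots,B_{n/t}$ of size $t$, and for $e\in E(F)$ and a block $B_j$ let $e_j\subseteq B_j$ be the shifted copy of $e$. Let $\pi\colon V(H)\to[n]$ be a uniformly random bijection. If $\pi^{-1}(e_j)\in E(H)$ for every block $j$ and every $e\in E(F)$, then each block spans a copy of $F$ and $H$ has an $F$-factor, so it suffices to show such a $\pi$ exists. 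I would encode the obstruction by a family of \emph{canonical} bad events: for each block $j$, edge $e\in E(F)$, non-edge $S$ of $H$ with $|S|=k$, and bijection $\sigma\colon S\to e_j$, let $A_{j,e,S,\sigma}$ be the event $\{\pi|_S=\sigma\}$. If no $A_{j,e,S,\sigma}$ occurs then every $\pi^{-1}(e_j)$ is an edge.

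First I would record the easy inputs. Since $e(H)\ge\frac nk\delta_1(H)$, the number of non-edge $k$-sets is at most $q\binom nk$ and the number of non-edge $k$-sets through a fixed vertex is at most $q\binom{n-1}{k-1}$, where $q:=1-\delta_1(H)/\binom{n-1}{k-1}$; and $\Pr[A_{j,e,S,\sigma}]=(n-k)!/n!$. The relevant dependency structure is the conflict graph of Lu–Székely: canonical events $A_f$, $A_g$ ($f,g$ partial injections $V(H)\to[n]$) are joined exactly when $f\cup g$ is not a partial injection, and this conflict graph is a lopsidependency graph for canonical events under a uniformly random bijection. In our case $A_{j,e,S,\sigma}\sim A_{j',e',S',\sigma'}$ therefore only if either (i) $j=j'$ and the slots $e_j,e'_j$ share a position — forcing $e\cap e'\neq\emptyset$ in $F$ — or (ii) $S\cap S'\neq\emptyset$ (and if $j\neq j'$ a common vertex is automatically sent to two different positions, which is a conflict).

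The heart of the argument is the weighted dependency count
\[
\Sigma(B):=\sum_{B'\sim B}\Pr[B']\qquad\text{for a fixed bad event }B=A_{j,e,S,\sigma},
\]
which I claim is at most $q\bigl(d+1+\tfrac mt k^2\bigr)$. For the within-block part (type (i)), the edges $e'$ of $F$ whose slot meets the slot of $e$ are $e$ itself together with the at most $d$ edges meeting $e$, hence at most $d+1$ edges; for each such $e'$, summing $\Pr[A_{j,e',S',\sigma'}]$ over all non-edges $S'$ and all bijections $\sigma'\colon S'\to e'_j$ gives at most (number of ordered non-edge $k$-tuples)$\cdot(n-k)!/n!\le q$, so this part contributes at most $(d+1)q$. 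For the vertex-overlap part (type (ii)), a union bound over the $k$ vertices $v\in S$ reduces us to bounding $\sum_{B':\,v\in S'}\Pr[B']$; grouping these events by the ``slot-position'' of $v$ — a triple (block $j'$, edge $e'\in E(F)$, vertex $x\in e'$), of which there are $\tfrac nt mk$ — and using that the number of ordered $(k-1)$-tuples completing $v$ to a non-edge is at most $q\,(k-1)!\binom{n-1}{k-1}=q\,(n-1)!/(n-k)!$, one gets $\sum_{B':\,v\in S'}\Pr[B']\le \tfrac nt mk\cdot\tfrac qn=\tfrac{mkq}{t}$, hence at most $\tfrac{mk^2q}{t}$ after summing over $v\in S$. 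Adding the two parts gives $\Sigma(B)\le q\bigl(d+1+\tfrac mt k^2\bigr)$, and by hypothesis $q\le\bigl(e(d+1+\tfrac mt k^2)\bigr)^{-1}$, so $e\,\Sigma(B)\le 1$. Applying the lopsided Local Lemma with the symmetric choice $x_{B'}=e\Pr[B']$ — legitimate since $\Pr[B']=(n-k)!/n!$ is minuscule — yields $\Pr\bigl[\bigcap_{B'}\overline{B'}\,\bigr]>0$, so the desired $\pi$, and hence an $F$-factor, exists.

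The main obstacle is precisely getting the count $\Sigma(B)$ to collapse to the stated constant: one must set the bad events up as \emph{canonical} events (the naive ``block $j$ fails'' events are pairwise dependent and give nothing), invoke the Lu–Székely fact that the conflict graph is a lopsidependency graph, and then organize the cross-block sum so that each $n$-term sum contributes only $O(1/n)$, which is what turns a crude bound into $d+1+\tfrac mt k^2$. A secondary technical point: the $(n-k)!/n!$ error terms appearing in the Local Lemma verification are harmless only for $n$ large (or with a marginally sharper form of the lemma), so an honest write-up should either assume $n$ large or absorb them into the slack between $q$ and $(e(d+1+\tfrac mt k^2))^{-1}$.
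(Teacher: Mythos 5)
Your proposal is correct and follows essentially the same route as Lu and Sz\'ekely's original argument: a uniformly random bijection placing the vertices of $H$ into $n/t$ blocks, canonical bad events given by partial injections mapping a non-edge of $H$ onto a slot $e_j$, the Lu--Sz\'ekely lopsidependency graph (conflict $\iff$ not a common partial injection), and the weighted dependency count $\Sigma(B)\le q(d+1+\tfrac{m}{t}k^2)$ fed into the lopsided Local Lemma with $x_{B'}=e\Pr[B']$. The block/slot decomposition, the split of neighbors into within-block edges ($\le d+1$) and cross-block vertex overlaps (the $\tfrac{m}{t}k^2$ term), and the acknowledged $O((n-k)!/n!)$ slack needed to make the symmetric LLL bookkeeping go through all match the source.
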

For example, when $F=K^k_t$, we have $m=\binom{t}{k}$, $d< k\binom{t-1}{k-1}$ and consequently $ \d_1(n, K_t^k) \le \left(1- \frac{1}{2ek \binom{t-1}{k-1}}\right)\binom{n-1}{k-1}$. On the other hand, we have $\d_1(n, K_t^k)> \binom{n-1}{k-1} - \binom{(k-1)n/t}{k-1}$ by considering $H^0_{(t- k+1)n/t}(n, k)$ defined in Construction~\ref{cons:ms}.
Therefore, when $k$ is fixed and $t\to \infty$, we have
\begin{equation}\label{eq:d1K}
 \d_1(n, K_t^k) =  \left(1- \Theta \left(\frac{1}{t^{k-1}}\right)\right)\binom{n-1}{k-1}.
\end{equation}

By considering complement $r$-graphs, we can translate \eqref{eq:d1K} to the following corollary on equitable colorings of hypergraphs. An \emph{equitable $\ell$-coloring} of a (hyper)graph is a proper vertex coloring with $\ell$ colors such that the sizes of any two color classes differ by at most one.
\begin{corollary}\label{cor}
For every $k\ge 3$ there exists $c_k>0$ such that every $k$-graph with maximum vertex degree $d$ has an equitable $\ell$-coloring for any $\ell\ge c_k d^{1/(k-1)}$.
\end{corollary}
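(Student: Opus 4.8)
The plan is to pass to the complement $k$-graph and feed it into the $K_t^k$-factor bound \eqref{eq:d1K} (equivalently Theorem~\ref{thm:LS}). Fix a $k$-graph $G$ on $n$ vertices with maximum vertex degree $d$; I may assume $d\ge 1$, since an empty $k$-graph is trivially equitably $\ell$-colourable for every $\ell$. Let $\bar G$ be the complement $k$-graph on the same vertex set. A vertex set $S$ is independent in $G$ (contains no edge) exactly when every $k$-subset of $S$ is an edge of $\bar G$, i.e. when $S$ spans a clique of $\bar G$; hence an equitable $\ell$-colouring of $G$ is precisely a partition of $V(\bar G)$ into $\ell$ cliques whose sizes all lie in $\{\lfloor n/\ell\rfloor,\lceil n/\ell\rceil\}$. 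Moreover $\deg_{\bar G}(v)=\binom{n-1}{k-1}-\deg_G(v)\ge\binom{n-1}{k-1}-d$ for every $v$, so $\d_1(\bar G)\ge\binom{n-1}{k-1}-d$.

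The main case is $\ell\mid n$; put $t=n/\ell$. If $t<k$ then any partition into $\ell$ classes of size $t$ is already a proper (so equitable) colouring, so assume $t\ge k$. Then a partition into $\ell$ cliques of size $t$ is exactly a $K_t^k$-factor, and I would simply apply to $\bar G$ the bound from Theorem~\ref{thm:LS}, namely $\d_1(n,K_t^k)\le\big(1-\tfrac1{2ek\binom{t-1}{k-1}}\big)\binom{n-1}{k-1}$. Since $\binom{n-1}{k-1}/\binom{t-1}{k-1}=\prod_{i=1}^{k-1}\tfrac{n-i}{t-i}\ge(n/t)^{k-1}=\ell^{k-1}$, this gives
\[
\binom{n-1}{k-1}-\d_1(n,K_t^k)\ \ge\ \frac{\binom{n-1}{k-1}}{2ek\binom{t-1}{k-1}}\ \ge\ \frac{\ell^{\,k-1}}{2ek}.
\]
Thus if $d\le\ell^{k-1}/(2ek)$, i.e. if $\ell\ge c_k d^{1/(k-1)}$ with $c_k:=(2ek)^{1/(k-1)}$, then $\d_1(\bar G)\ge\binom{n-1}{k-1}-d\ge\d_1(n,K_t^k)$, so $\bar G$ has a $K_t^k$-factor — precisely the desired equitable colouring of $G$.

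The step I expect to be the main obstacle is the divisibility clean-up when $\ell\nmid n$. Writing $n=q\ell+s$ with $0<s<\ell$, I need a partition of $V(\bar G)$ into $s$ cliques of size $q+1$ and $\ell-s$ cliques of size $q$, a mixed-size factor not literally covered by Theorem~\ref{thm:LS}. The obvious patches do not work: adding $\ell-s$ dummy isolated vertices to $G$ fails because one cannot control how they spread among the colour classes of the resulting $K_{q+1}^k$-factor, and deleting $s$ vertices and re-inserting them afterwards fails because a single vertex with $d\gg\ell$ incident edges may be barred from re-insertion into almost every class. The route I would take is to revisit the Lov\'asz Local Lemma argument of Lu and Sz\'ekely behind Theorem~\ref{thm:LS}: a uniformly random partition of $V(\bar G)$ into blocks of the prescribed sizes $q$ and $q+1$ avoids, by essentially the same estimate with $t:=q+1=\lceil n/\ell\rceil$, every bad event that some block contains a non-edge of $\bar G$, provided $\d_1(\bar G)\ge\big(1-\Omega_k(t^{-(k-1)})\big)\binom{n-1}{k-1}$; rerunning the arithmetic above with this $t$ then finishes the proof. (Alternatively, one can cite the treatment of this point in the source of Corollary~\ref{cor}.)
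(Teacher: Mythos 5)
Your approach --- complementing $G$ and deducing the desired partition into equal-size cliques from the $K^k_t$-factor bound~\eqref{eq:d1K}/Theorem~\ref{thm:LS} --- is exactly the translation the paper has in mind, and your arithmetic giving $c_k=(2ek)^{1/(k-1)}$ is correct. You are also right that the case $\ell\nmid n$ is not literally an $F$-factor question and that the naive padding/deletion fixes break down; the paper's one-line derivation glosses over this, and re-running the Lu--Sz\'ekely local-lemma argument over uniformly random partitions with block sizes $\lfloor n/\ell\rfloor$ and $\lceil n/\ell\rceil$, as you propose, is the appropriate way to close it.
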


\begin{problem}
Improve the constant $c_k$ in Corollary~\ref{cor}, \ie, improve the constants hidden in \eqref{eq:d1K}.
\end{problem}

Lo and Markstr\"om \cite{LoMa-fa} applied the results in \cite{khan1,khan2} to determine $\d_1(n, K^3_3(m))$ and $\d_1(n, K^4_4(m))$ asymptotically. Recently Han and the author \cite{HZ3} and independently Czygrinow \cite{Czy14} determined $\d_1(n, K^3_{1, 1, 2})$ exactly for sufficiently large $n$. More recently Han, Zang, and the author \cite{HZZ_tiling} determined $\d_1(n, K^3_{a, b, c})$ asymptotically for arbitrary $a\le b\le c$.  Given $a\le b\le c$, let $d=\gcd(b-a,c-b)$ and define
\begin{equation}\label{absbound}
f(a,b,c) :=
\begin{cases}
1/4, & \text{if $a=1$, $\gcd(a,b,c)=1$ and $d=1$;}\\
6-4\sqrt{2}\approx 0.343, & \text{if $a\geq 2$, $\gcd(a,b,c)=1$ and $d=1$;}\\
4/9 , & \text{if $\gcd(a,b,c)=1$ and $d\geq 3$ is odd;}\\
1/2, & \text{otherwise.}
\end{cases}
\end{equation}

\begin{theorem}\cite{HZZ_tiling}
\label{thm:HZZ}
\begin{equation*}
\d_{1}(n, K_{a,b,c})= \left(\max\left\{f(a,b,c), 1-\left(\frac {b+c}{a+b+c} \right)^2, \left(\frac {a+b}{a+b+c} \right)^2 \right\} + o(1) \right)\binom n2.
\end{equation*}
\end{theorem}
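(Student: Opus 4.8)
The plan is to prove matching lower and upper bounds for $\d_1(n,K_{a,b,c})$. Write $F:=K_{a,b,c}$, let $d=\gcd(b-a,c-b)$ as in \eqref{absbound}, and let $M$ denote the maximum appearing in Theorem~\ref{thm:HZZ}.

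For the lower bound I would exhibit, for each large $n\in(a+b+c)\mathbb N$, an $F$-factor-free $3$-graph on $n$ vertices with $\d_1\ge(M-o(1))\binom n2$; it will be the best of three families. Two are \emph{space barriers} in the spirit of Construction~\ref{cons:ms}: (i) $V=A\cup B$ with $|A|=\lceil\tfrac{a}{a+b+c}n\rceil-1$ and all triples meeting $A$ --- in any embedded copy of $F$ each edge meets $A$, so covering one vertex of $B$ costs at least $\tfrac{a}{b+c}$ of a vertex of $A$ (optimally: the part of size $a$ inside $A$, the parts of sizes $b,c$ inside $B$), whence an $F$-factor needs $|A|\ge\tfrac{a}{a+b+c}n$, and the minimum degree is $(1-(\tfrac{b+c}{a+b+c})^2-o(1))\binom n2$; (ii) $|A|=\lceil\tfrac{a+b}{a+b+c}n\rceil-1$ and all triples with at least two vertices in $A$ --- now each edge has at most one vertex in $B$, so covering a vertex of $B$ costs at least $\tfrac{a+b}{c}$ of a vertex of $A$, an $F$-factor needs $|A|\ge\tfrac{a+b}{a+b+c}n$, and the minimum degree, attained on $B$, is $((\tfrac{a+b}{a+b+c})^2-o(1))\binom n2$. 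The third family consists of \emph{divisibility barriers}: fix a partition $\P=(V_1,\dots,V_r)$ with $r\in\{2,3\}$ and delete every triple whose $\P$-pattern would permit some embedded copy of $F$ to have its index vector $\iP(\cdot)$ outside a proper sublattice $L\subsetneq\mathbb Z^r$; then choose the $|V_i|$ with $(|V_1|,\dots,|V_r|)\notin L$, which is possible precisely because the index of $L$ is governed by $\gcd(a,b,c)$ and $d$, destroying all $F$-factors. Running through the cases of \eqref{absbound}: if $\gcd(a,b,c)>1$ or $d$ is even, take $r=2$, $|A|\approx n/2$, and all triples with $|e\cap A|$ even (forcing every part of a copy to be monochromatic and an even number of parts to lie in $A$, so all index vectors are multiples of $\gcd(a+b,a+c,b+c)\ge 2$), with minimum degree $(\tfrac12-o(1))\binom n2$; if $\gcd(a,b,c)=1$ and $d\ge 3$ is odd, take $r=3$, $|V_1|\approx n/3$, and all triples with exactly one vertex in $V_1$ (each copy then has exactly one part inside $V_1$, so $|V_1|$ is a sum of $t=n/(a+b+c)$ terms from $\{a,b,c\}$, and for $d\ge 3$ some value near $n/3$ has the wrong residue mod $d$), with minimum degree $(\tfrac49-o(1))\binom n2$; if $d=1$, the coarse barrier keeping only triples contained in $A$ or in $B$ (each copy lies wholly in $A$ or wholly in $B$, forcing $(a+b+c)\mid|A|$) already obstructs, with minimum degree $(\tfrac14-o(1))\binom n2$ when $a=1$, while for $a\ge 2$ one adds a suitable extra family of triples and re-balances the part sizes to raise the minimum degree to $(6-4\sqrt2-o(1))\binom n2=((2-\sqrt2)^2-o(1))\binom n2$, the value $2-\sqrt2$ being the root of the resulting quadratic. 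Taking the best of the three gives $\d_1(n,F)\ge(M-o(1))\binom n2$.

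For the upper bound, assume $\d_1(H)\ge(M+\g)\binom n2$ for a fixed small $\g>0$; I would run the absorbing method, via an \emph{absorbing lemma}, an \emph{almost-perfect tiling lemma}, concatenation, and a stability argument for the near-extremal case. The absorbing lemma should say: \emph{either} $H$ is $\eps$-close to one of the above extremal configurations, \emph{or} $H$ contains an $F$-packing $\mathcal A$ with $|V(\mathcal A)|=o(n)$ such that $H[V(\mathcal A)\cup W]$ has an $F$-factor for every $W\subseteq V\setminus V(\mathcal A)$ with $|W|\in(a+b+c)\mathbb N$ and $|W|\le\xi n$. As in the lattice-based absorption of Lo--Markstr\"om and of Han, its core is a \emph{reachability} statement: for all but $o(n)$ vertices $v,v'$ of a common type (a coset of the relevant lattice) there is a bounded-size set $T$, with $|T|\equiv-1\pmod{a+b+c}$, such that both $H[T\cup\{v\}]$ and $H[T\cup\{v'\}]$ have $F$-factors, so $v$ and $v'$ may be interchanged in a partial $F$-factor; away from the divisibility barriers the index vectors of the copies of $F$ available around a typical small set generate all of $\mathbb Z^r$, which yields reachability, and a random-greedy selection then produces $\mathcal A$. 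The almost-perfect tiling lemma should say that already $\d_1(H)\ge(\max\{1-(\tfrac{b+c}{a+b+c})^2,(\tfrac{a+b}{a+b+c})^2\}+o(1))\binom n2$ forces an $F$-packing covering all but $o(n)$ vertices; I would prove it with the weak hypergraph regularity lemma --- pass to a reduced $3$-graph inheriting essentially the same relative minimum vertex degree, find in it a near-perfect \emph{fractional} $F$-tiling (whose threshold is exactly the two space-barrier values, since a fractional obstruction is a weighted space barrier), and realise it as an integral near-perfect $F$-packing using the regular triples within and across clusters. To conclude, reserve $\mathcal A$, apply the almost-perfect tiling lemma to $H-V(\mathcal A)$ (valid since $M\ge\max\{1-(\tfrac{b+c}{a+b+c})^2,(\tfrac{a+b}{a+b+c})^2\}$), and absorb the remaining at most $\xi n$ vertices with $\mathcal A$. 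If instead $H$ is $\eps$-close to an extremal configuration, I would argue directly: since $\d_1(H)$ exceeds the minimum degree of that configuration by $\g\binom n2$, every vertex lies in $\Omega(n^2)$ ``surplus'' edges (copies of $F$ crossing a divisibility barrier, or copies economical on the ``space'' side), and a bounded number of these repair the few defective copies in an otherwise routine tiling of the near-extremal structure.

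The main obstacle is the divisibility analysis. On the construction side one must, in each arithmetic regime, pin down the \emph{optimal} obstructing partition hypergraph and verify both that it admits no $F$-factor and that its minimum vertex degree equals $(f(a,b,c)-o(1))\binom n2$; the non-round value $6-4\sqrt2$ already signals a genuine extremal optimization rather than a bookkeeping step. On the absorption side the delicate point is establishing reachability precisely \emph{outside} a neighbourhood of the divisibility barriers, i.e.\ the quantitative dichotomy ``either the local index vectors of $F$-copies generate $\mathbb Z^r$, or $H$ is $\eps$-close to a barrier''; this, rather than the more routine (if vertex-degree-specific) regularity and fractional-tiling work behind the almost-perfect tiling lemma, is where the bulk of the effort lies. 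The statement recovers the known values $\d_1(n,K^3_{1,1,2})$ \cite{HZ3} and $\d_1(n,K^3_3(m))$ \cite{LoMa-fa} as special cases, and its proof should be a common generalization and refinement of the methods there.
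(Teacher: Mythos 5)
Your general framework — two space barriers, several divisibility barriers, a lattice-based absorbing lemma, weak regularity plus a fractional tiling for the almost-perfect packing, and a stability analysis near the extremal configurations — does match the approach the paper attributes to \cite{HZZ_tiling}, so the architecture is sound. There is, however, one genuine gap: you never identify the \emph{tiling barrier}, and this is precisely where the coefficient $6-4\sqrt{2}$ comes from. The paper is explicit that the lower bound uses six constructions, three divisibility barriers, two space barriers, and one tiling barrier, the last characterized by the property that \emph{some vertex lies in no copy of} $K_{a,b,c}$; it then records $\tau_1(n,K^3_{a,b,c})\le(6-4\sqrt2+o(1))\binom n2$ with equality when $a\ge 2$. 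You instead describe the $a\ge 2$, $d=1$ construction as the coarse $A/B$ divisibility barrier plus ``a suitable extra family of triples'' with ``re-balanced'' part sizes. That is not the right mechanism: the obstruction in this regime is not arithmetic but local (a vertex whose link is too poor for any copy of $K_{a,b,c}$ to pass through it), and the quadratic whose root is $2-\sqrt2$ arises from optimizing the link of that single excluded vertex against the global minimum-degree constraint, not from a lattice computation.

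The same omission propagates to your upper bound. Your absorbing-lemma dichotomy is ``either the local index vectors generate $\mathbb Z^r$, or $H$ is $\eps$-close to a divisibility barrier,'' but reachability in the sense you define it is stated between pairs of vertices \emph{each of which already lies in $\Omega(n^{a+b+c-1})$ copies of $F$}; the absorbing argument has a third failure mode, namely a vertex that lies in too few copies of $F$, and one must separately show that $\delta_1(H)\ge (6-4\sqrt2+\gamma)\binom n2$ (in the $a\ge 2$ case) forces every vertex into many copies. Without the $\tau_1$ analysis this step is missing. Put differently, your proposal correctly handles the $1/2$, $4/9$, $1/4$ and the two space-barrier values, but the $6-4\sqrt2$ value — the case the paper itself singles out as the novel, irrational-coefficient regime — is not actually proved, only gestured at. Everything else (the two space barriers, the parity/gcd divisibility barriers, the regularity-plus-fractional route to the almost-perfect tiling, and the lattice-based absorption away from barriers) is consistent with what the paper reports about \cite{HZZ_tiling}.
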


It is interesting to note that Theorem~\ref{thm:HZZ} contains a case where the coefficient of the packing threshold is \emph{irrational}. In fact, as far as we know, all the previously known tiling thresholds have rational coefficients.
The lower bound in Theorem \ref{thm:HZZ} follows from six constructions: three of them are {divisibility barriers}, two are {space barriers}, and the last construction was called a \emph{tiling barrier} because there exists a vertex that is not contained in any copy of $K_{a, b, c}$. In general, given a $k$-graph $F$, let $\tau_d(n, F)$ denote the minimum integer $t$ such that every $k$-graph $H$ of order $n$ with $\delta_d(H) \ge t$ has the property that every vertex of $H$ is covered in some copy of $F$. When $F$ is a graph, it is not hard to see that $\tau_1(n, F) = (1 - 1/(\chi(F) - 1) + o(1))n$ (see the concluding remarks of \cite{HZZ_tiling}).
Given a $k$-graph $F$, trivially
\begin{equation}
\label{eq:tau}
 \ex_d(n, F) < \tau_d(n, F) \le \d_{d}(n, F),
\end{equation}
where $\ex_d(n, F)$ is the \emph{$d$-degree Tur\'an number} of $F$, defined as the smallest integer $t$ such that every $r$-graph $H$ of order $n$ with $\delta_d(H) \ge t+1$ contains a copy of $F$. It was shown \cite{HZZ_tiling} that $\tau_1(n, K^3_{a,b,c})\le (6-4\sqrt{2}+o(1)) \binom{n}{2}$, and equality holds when $a\ge 2$. It will be interesting to know $\tau_1(n, F)$ for other 3-graphs.

\subsection{Methods}
Most packing thresholds on graphs, \eg, \cite{KSS-AY, KuOs09} were obtained by the regularity method using the Regularity Lemma of Szemer\'edi \cite{Sze} and the Blow-up Lemma of Koml\'os, S\'ark\"ozy, and Szemer\'edi \cite{Blowup}. As the hypergraph versions of these two lemmas are now available, it is possible to attack hypergraph packing problems by the same approach though the proofs become long and technical -- \cite{KeMy1, Myc14} are two examples.

Keevash and Mycroft \cite{KeMy1} derived a theorem on perfect matchings for simplicial complexes that is very useful for packing problems. To state this result precisely, we need several definitions.
A \emph{$k$-system} $J$ is a set system in which the largest set of $J$ has size $k$ and $\emptyset \in J$. A \emph{$k$-complex} is a {downward closed} $k$-system, namely, every subset of a set in $J$ is also in $J$.
Given a $k$-system $J$, let $J_r$ denote the family of $r$-sets in $J$ for $0\le r\le k$. The \emph{minimum $r$-degree} of $J$, denoted by $d_r(J)$, is the minimum $\deg_{J_{r+1}}(e)$ among all $e\in J_r$. (Note that this is different from $\d_r(J_{r+1})$, which is the minimum $\deg_{J_{r+1}}(S)$ among all $r$-sets $S\subseteq V(J)$.)
The \emph{degree sequence} of $J$ is $\mathbf{d}(J)= (d_0(J), d_1(J), \dots, d_{k-1}(J))$. Given a vector $\mathbf{a}= (a_0, a_1, \dots, a_{k-1})$, we write $\mathbf{d}(J)\ge \mathbf{a}$ if $d_i(J)\ge a_i$ for $0\le i\le k-1$.

Let $H=(V, E)$ be a $k$-graph and let $\mathcal{P}$ be an ordered partition of $V$ into $V_1, \dots, V_t$. The \emph{index vector} $\mathbf{i}_{\mathcal{P}}(S)$ of a set $S\subseteq V$ is defined as $ ( | S\cap V_1|, \dots, |S\cap V_t| )$. A vector of $\mathbb{Z}^t$ is referred to as a \emph{$k$-vector} if all its coordinates are non-negative and  sum to $k$ (thus all $\mathbf{i}_{\mathcal{P}}(e)$, $e\in E$, are $k$-vectors).
A \emph{lattice} in $ \mathbb{Z}^t$ is an additive subgroup of $ \mathbb{Z}^t$.
We let $L_{\mathcal{P}}(H)$ denote the lattice generated by the index vectors $\mathbf{i}_{\mathcal{P}}(e)$ for all edges $e$ of $H$. Given $\mu>0$, let $L_{\mathcal{P}}^{\mu}(H)$ denote the lattice generated by all vectors $\mathbf{x}\in \mathbb{Z}^t$ such that there are at least $\mu n^k$ edges $e$ of $H$ with $\mathbf{i}_{\mathcal{P}}(e) = \mathbf{x}$. A lattice is \emph{complete} if it contains all $k$-vectors; otherwise it is \emph{incomplete}. A lattice is \emph{transferral-free} if it contains no $\mathbf{u}_i - \mathbf{u}_j$ for any $i\ne j$, where $\mathbf{u}_i$ is the 1-vector whose $i$th coordinate is one.

\begin{theorem}\cite[Theorem 2.9]{KeMy1}
\label{thm:KM}
Suppose that $1/n \ll \a \ll \mu, \b \ll 1/k$ and $k$ divides $n$. Let $J$ be a $k$-complex on $n$ vertices with degree sequence
\begin{equation}
\label{eq:dJ}
\mathbf{d}(J) \ge \left(n, (\tfrac{k-1}{k} - \a)n, (\tfrac{k-2}{k} - \a)n, \dots, (\tfrac{1}{k} - \a)n \right).
\end{equation}
Then at least one of the following properties holds:
\begin{description}
\item[1] $J_k$ contains a perfect matching.
\item[2 (Space barrier)] There exists a set $S\in V(J)$ with $|S|= jn/ k$ for some $1\le j\le k-1$ such that all but at most $\b n^k$ edges of $J_k$ intersect $S$ with at most $j$ vertices.
\item[3 (Divisibility barrier)] There exists a partition $\P$ of $V(J)$ into $t\le k$ parts of size at least $d_{k-1}(J) - \b n$ such that $L_{\mathcal{P}}^{\mu}(J_k)$ is incomplete and transferral-free.
\end{description}
\end{theorem}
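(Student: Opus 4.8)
The plan is to attack $J_k$ with the \emph{absorbing method}, using the \emph{reachability} relation to extract the vertex partition $\P$ and then reading the three alternatives off a linear-algebraic analysis of the lattice $L_{\P}^{\mu}(J_k)$. I would fix a hierarchy $1/n \ll \a \ll \g \ll \mu, \b \ll 1/k$, and call two vertices $x, y \in V(J)$ \emph{$(\g, i)$-reachable} if at least $\g n^{ik-1}$ of the $(ik-1)$-subsets $S$ of $V(J)\setminus\{x,y\}$ are such that both $J_k[S \cup \{x\}]$ and $J_k[S \cup \{y\}]$ have perfect matchings; write $x \sim y$ if this holds for some $1 \le i \le k$. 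The first step is to show, from the degree sequence \eqref{eq:dJ} via a supersaturation argument (each $(k-1)$-set $e \in J_{k-1}$ has $\ge (\tfrac1k - \a)n$ completions to an edge, and one iterates over the coordinates of $\mathbf{d}(J)$), that after discarding $O(\a n)$ vertices the relation $\sim$ is essentially an equivalence relation whose classes each have size at least $d_{k-1}(J) - \b n$; since these classes are disjoint subsets of an $n$-set and $d_{k-1}(J) - \b n > n/(k+1)$, there are at most $k$ of them. This yields a partition $\P = \{V_1, \dots, V_t\}$ with $t \le k$.

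Next I would build an \emph{absorbing matching} $M_0$ in $J_k$ with $|M_0| = o(n)$ such that $J_k[V(M_0) \cup W]$ has a perfect matching for every $W \subseteq V(J) \setminus V(M_0)$ with $|W| \in k\mathbb{N}$, $|W| \le \mu^2 n$, and $\iP(W) \in L_{\P}^{\mu}(J_k)$: within a single reachability class one absorbs one vertex at a time using bounded switching configurations guaranteed by $\sim$, a random choice of polynomially many disjoint such configurations gives $M_0$, and the condition $\iP(W) \in L_{\P}^{\mu}(J_k)$ is exactly what allows $W$ to be split into pieces the class-wise absorbers can handle. Then delete $V(M_0)$ and seek a matching $M_1$ of $J' := J_k - V(M_0)$ covering all but $o(n)$ vertices. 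If $M_1$ exists and $W := V(J') \setminus V(M_1)$ satisfies $\iP(W) \in L_{\P}^{\mu}(J_k)$, then $M_0$ absorbs $W$ and we obtain a perfect matching of $J_k$ --- alternative \textbf{1}. So everything comes down to (a) when $J'$ has no almost-spanning matching, and (b) when $\iP(W) \notin L_{\P}^{\mu}(J_k)$.

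For (a): a maximum matching of $J'$ leaving $\ge \eps n$ vertices uncovered, fed through the codegree bound $d_{k-1}(J) \ge (\tfrac1k - \a)n$ by an RRS-type greedy/fractional argument, forces all but $\b n^k$ edges of $J_k$ to meet some fixed set $S$ with $|S| = jn/k$, $1 \le j \le k-1$, in at most $j$ vertices --- alternative \textbf{2}. For (b): $\iP(W)$ is a nonnegative integer vector with coordinate sum $|W| \equiv 0 \pmod k$, so if $L_{\P}^{\mu}(J_k)$ is \emph{complete} it contains every such vector and we are back to alternative \textbf{1}; if $L_{\P}^{\mu}(J_k)$ is incomplete but not transferral-free, it contains some $\mathbf{u}_i - \mathbf{u}_j$ with $i \ne j$, and realizing this transferral by a bounded family of vertex-disjoint switchings shows $V_i$ and $V_j$ ought to lie in a common reachability class, so I would merge them, decreasing $t$, and restart. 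As $t$ strictly decreases this iteration halts; at that point we are in alternative \textbf{1} or \textbf{2}, or else $L_{\P}^{\mu}(J_k)$ is incomplete and transferral-free with every part of size $\ge d_{k-1}(J) - \b n$, i.e.\ alternative \textbf{3}.

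The hardest parts will be the two structural dichotomies that power the reduction. First, proving the reachability classes are as large as $d_{k-1}(J) - \b n$ genuinely needs the whole degree sequence $\mathbf{d}(J)$, not merely the codegree, together with a delicate double counting to rule out medium-sized classes. Second, the step ``incomplete and not transferral-free $\Rightarrow$ mergeable classes'' requires converting an abstract lattice element $\mathbf{u}_i - \mathbf{u}_j$ into an honest bounded supply of vertex-disjoint switching gadgets that move a vertex between $V_i$ and $V_j$, and then checking the merged class still has the size and reachability needed to re-enter the argument --- all while keeping the chain of constants $\a \ll \g \ll \mu, \b$ consistent across the absorbing construction, the almost-perfect-matching lemma, and the boundedly many merges. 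That bookkeeping, rather than any single idea, is the technical core.
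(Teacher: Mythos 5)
Your plan is a genuinely different route from what Keevash and Mycroft actually do. As the survey itself notes in Section~2.2, the proof in \cite{KeMy1} is carried out with the \emph{hypergraph regularity method}: they apply a strong hypergraph regularity lemma to $J$, pass to a reduced $k$-system, find (or fail to find) a fractional perfect matching there, and read the space and divisibility barriers off the reduced system via LP duality and an analysis of index vectors, before transferring back with a blow-up-type argument and a regularity-compatible absorbing lemma. What you propose is instead the reachability-based, \emph{lattice-based absorbing} approach in the style of Lo--Markstr\"om and Han, which was developed \emph{after} \cite{KeMy1} (and, as the survey records, is the engine behind \cite{Han14_Poly, HZZ_tiling}). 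It is a legitimate, more elementary alternative in spirit, and your high-level skeleton --- build a partition $\P$ from reachability classes, build a lattice-aware absorber, then close via an almost-perfect-matching-or-barrier dichotomy plus merging when $L_\P^\mu$ contains a transferral --- is the right shape for that method. The trade-off is real: \cite{KeMy1} pay the (large) fixed cost of hypergraph regularity but then work in a clean finite reduced system where the barriers fall out of linear programming; your approach avoids regularity entirely but must do all the structural analysis directly in $J_k$.

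That said, there is a concrete gap in step (a). You assert that a maximum matching of $J'$ leaving $\ge \eps n$ vertices uncovered, ``fed through the codegree bound $d_{k-1}(J)\ge(\tfrac1k-\a)n$ by an RRS-type greedy/fractional argument,'' forces the space barrier of alternative~\textbf{2}. This does not follow. The RRS greedy argument (Proposition~\ref{prop:RRS} in this survey) operates \emph{above} codegree $n/k$ and \emph{produces} a large matching; here the codegree is $(\tfrac1k-\a)n$, strictly below that threshold, and no such greedy step is available. More importantly, the conclusion of alternative~\textbf{2} is quite rigid: a set $S$ of size \emph{exactly} $jn/k$ for some $1\le j\le k-1$ such that all but $\b n^k$ edges meet $S$ in at most $j$ vertices. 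A failed greedy or fractional argument generically gives you a small fractional vertex cover, i.e.\ the $j=1$ case after rounding; obtaining the general-$j$ form, and pinning $|S|$ to a multiple of $n/k$, requires an argument that uses the full degree sequence $\mathbf{d}(J)$ through all levels $J_1,\dots,J_{k-1}$, not just the codegree. In \cite{KeMy1} this is exactly where the reduced-system analysis and LP duality do the work. A regularity-free substitute exists (this is part of what makes the lattice-based method go), but it is a substantial lemma in its own right, not a corollary of the codegree bound, and your proposal as written treats it as routine. Similarly, the claim that the reachability classes each have size at least $d_{k-1}(J)-\b n$ is the crux of step~1 and needs the degree sequence through every level; you flag it as hard, which is correct, but the proposal does not yet indicate how the intermediate degrees $d_1(J),\dots,d_{k-2}(J)$ would be used to rule out small classes.
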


After obtaining this theorem, Keevash and Mycroft \cite{KeMy1} attacked the $K_4^3$-packing problem as follows.
Let $H=(V, E)$ be a $3$-graph with $\d(H)\ge 3n/4$. Consider the so-called \emph{clique $4$-complex} $J$ of $H$, in which $J_i= \binom{V}{i}$ for $i\le 2$, $J_3= H$, and $J_4$ is the family of all $4$-sets that span copies of $K_4^3$. It is easy to see that
\[
d_0(J)= n, \quad d_1(J)= n-1, \quad d_2(J)=\d_2(H)\ge \frac34 n, \quad and \quad d_3(J)\ge \frac{n}4 + 3.
\]
Indeed, to find $d_3(J)$, we fix a 3-set $abc\in E(H)$; each of the pairs $ab, ac, bc$ has at least $3n/4 -1$ neighbors in $V\setminus \{a, b, c\}$ and thus at least $n/4+3$ vertices $d\in V$ are the neighbors of all $ab, ac, bc$.\footnote{Here we see why we need to consider the 4-complex $J$ instead of the 4-graph $J_4$ alone: $\d_3(J_4)=0$ because a 3-set $abc\not\in E(H)$ has degree zero in $J_4$.} Next we apply Theorem~\ref{thm:KM} to conclude that either $J_4$ contains a perfect matching or there is a space or divisibility barrier. What remains is to show that $H$ contains a $K_4^3$-factor if it is a space or divisibility barrier satisfying the minimum codegree condition.

Most of the aforementioned packing thresholds were obtained by the absorbing method. As described in Section~\ref{ss:1} for the matching problems, our goal is first obtaining a small absorbing $F$-packing that can absorb any smaller set of vertices, and then finding an $F$-packing that covers most of the remaining vertices.


Given a $k$-graph $F$ of order $f$, suppose we want to find an $F$-factor in an $n$-vertex $k$-graph $H=(V, E)$ with certain degree conditions. Given $\eps>0$, $i\in \mathbb{N}$, and two vertices $x, y\in V(H)$, we say that $x$ and $y$ are \emph{$(\eps, i)$-reachable} if there are at least $\eps n^{if-1}$ $(if-1)$-subsets of $W\subset V$ such that both $H[\{x\} \cup W]$ and $H[\{y\} \cup W]$ contain $F$-factors. A vertex set $U\subseteq V$ is \emph{$(\eps, i)$-closed} if any two vertices of $U$ are $(\eps, i)$-reachable. In order to find the desired absorbing $F$-packing, it suffices to show that $V$ is $(\eps, i)$-closed for some $\eps>0$ and $i\in \mathbb{N}$ -- see \cite[Lemma 1.1]{LoMa-fa}.
Sometimes this is straightforward but sometimes this is done in two steps,
referred to as a \emph{lattice-based absorbing method} in \cite{Han14_Poly, HZZ_tiling}.
In Step 1 we find a partition $\mathcal{P}= (V_1, \dots, V_t)$ of $V$ such that each $V_i$ is not small and $(\eps, i)$-closed for some $\eps>0$ and $i\in \mathbb{N}$. In Step 2 we show that $L_{\mathcal{P}, F}^{\mu}$ is complete for some $\mu>0$, where $L_{\mathcal{P}, F}^{\mu}$ is the lattice generated by all vectors $\mathbf{v}\in \mathbb{Z}^t$ such that there are at least $\mu n^f$ $f$-sets $S$ that span copies of $F$ and satisfy $\iP(S)= \mathbf{v}$. Once these steps are done, we can easily show that  $V$ is $(\eps', i')$-closed for some $\eps'>0$ and $i'\in \mathbb{N}$.


\section{Hamilton Cycles}
\label{sec:hc}
Cycles in hypergraphs have been studied since the 1970s. There are several notions of cycles. A $k$-graph $(V, E)$ is called a \emph{Berge cycle} if $E$ consists of $k$ distinct edges $e_1, \dots, e_t$ and $V$ contains distinct vertices $v_1, \dots, v_t$ (and possibly other vertices) such that each $e_i$ contains $v_i$ and $v_{i+1}$, where $v_{t +1} = v_1$. Bermond, Germa, Heydemann, and Sotteau \cite{BGHS} proved a Dirac-type theorem for Berge cycles. In recent years  a more structured notion of cycles has become more popular. Given $1\le l< k$, a $k$-graph $C$ is a called an \emph{$l$-cycle} if its vertices can be ordered cyclically such that each of its edges consists of $k$ consecutive vertices and every two consecutive edges (in the natural order of the edges) share exactly $l$ vertices. In a $k$-graph, a $(k-1)$-cycle is often called a \emph{tight} cycle while a $1$-cycle is often called a \emph{loose} cycle (sometimes called \emph{linear} cycle). We say that a $k$-graph contains a Hamilton $l$-cycle if it contains an $l$-cycle as a spanning subhypergraph. Note that a $k$-uniform $l$-cycle of order $n$ contains exactly $n/(k - l)$ edges, implying that $k- l$ divides $n$.

We define the threshold $\mathbf{h_d^{l}(k, n)}$ as the smallest integer $m$ such that every $k$-graph $H$ on $n$ vertices with $\d_d(H)\ge m$ contains a Hamilton $l$-cycle, provided that $k- l$ divides $n$. As before, we may omit the subscript when $d=k-1$. Unless stated otherwise, we assume that $n$ is sufficiently large in this section.

Katona and Kierstead \cite{KaKi} first studied $h^{k-1}(k, n)$ and proved that
\[
\left\lfloor \frac{n-k+3}{2} \right\rfloor \le h^{k-1}(k, n) \le \left( 1 - \frac{1}{2k} \right) n + O(1)
\]
The following construction provides the lower bound.
\begin{construction}\label{cons:KK}
Let $H=(V, E)$, where $V=X\cup Y\cup \{v\}$ such that $|X|= \lfloor \frac{n-1}2 \rfloor$ and $|Y|= \lceil \frac{n-1}2 \rceil$, and $E$ consists of all $k$-subsets of $V$ containing $v$ and all $k$-sets $S\subset X\cup Y$ such that $|S\cap X|\ne \lfloor k/2 \rfloor$.
\end{construction}
It is easy to check that $\d_{k-1}(H)\ge \lfloor (n-k+1)/2 \rfloor$. Suppose $H$ contains a tight Hamilton cycle. Then there exists an ordering $v_1, \dots, v_{n-1}$ of the vertices of $X\cup Y$ such that all $e_i:= \{v_i, \dots, v_{i+k-1}\}$, $1\le i\le n-k$, are edges. Let $a_i = | X\cap e_i |$. Then $\sum_{i=1}^{n-k} a_i$ is about $|X| k$ when $n$ is sufficiently large because all the vertices of $X$ are counted $k$ times except for those $v_i$ with $i<k$ and $i> n-k$. Thus the average $a_i$ is about $k/2$. On the other hand, we have $a_i\ne \lfloor k/2 \rfloor$ and $|a_i - a_{i+1}|\le 1$ for all $i$. Thus either all $a_i\le \lfloor k/2 \rfloor -1$ or all $a_i\ge \lfloor k/2 \rfloor +1$, a contradiction.

Using the absorbing method, R\"odl, Ruci\'nski and Szemer\'edi \cite{RRS06, RRS08} showed that
\begin{equation}
\label{eq:hk-1}
h^{k-1}(k, n) = \frac{n}2 + o(n)
\end{equation}
for all $k\ge 3$. With long and involved arguments, they \cite{RRS11} were able to obtain an exact result when $k=3$.
\begin{theorem}\cite{RRS11}
For sufficiently large $n$, $h^{2}(3, n) = \lfloor n/2 \rfloor$.
\end{theorem}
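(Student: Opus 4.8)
The lower bound $h^{2}(3,n)\ge \lfloor n/2\rfloor$ is immediate from Construction~\ref{cons:KK} with $k=3$, since that $3$-graph has $\d_2\ge\lfloor (n-2)/2\rfloor=\lfloor n/2\rfloor-1$ and no tight Hamilton cycle. So the whole content is the matching upper bound: every $3$-graph $H$ on $n$ vertices with $\d_2(H)\ge\lfloor n/2\rfloor$ contains a tight Hamilton cycle. The plan is to fix a small constant $\a>0$ and split according to whether $H$ is \emph{$\a$-extremal}, meaning there is a partition $V(H)=A\cup B$ with $\bigl||A|-n/2\bigr|\le\a n$ such that all but at most $\a n^{3}$ triples $e$ with $|e\cap A|=1$ are non-edges (equivalently, $H$ is $\a$-close to a subgraph of Construction~\ref{cons:KK}). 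These two cases require genuinely different treatments, and only the extremal one is sensitive to the exact value of the codegree bound.

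\emph{Non-extremal case.} Here I would run the absorbing-method proof of \eqref{eq:hk-1} due to R\"odl, Ruci\'nski and Szemer\'edi, sharpened by a stability argument so that the hypothesis can be relaxed to $\d_2(H)\ge(1/2-\a')n$ for $\a'\ll\a$ once $H$ is not $\a$-extremal. The three ingredients are: (i) a \emph{Connecting Lemma}, using the codegree condition to join any two disjoint ordered ``ends'' $x_1x_2$ and $y_1y_2$ by a tight path of bounded length whose interior avoids any prescribed set of $o(n)$ vertices; (ii) an \emph{Absorbing Lemma}, producing a tight path $P_0$ on $o(n)$ vertices such that for every disjoint $W$ with $|W|=o(n)$ of the right residue there is a tight path on $V(P_0)\cup W$ with the same ends, built from many vertex-absorbers spliced together by (i); (iii) a \emph{Path-cover Lemma}, showing that after deleting $P_0$ and a small random reservoir $R$, all but $o(n)$ of the remaining vertices lie on a bounded number of long tight paths. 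One then connects $P_0$ and these paths through $R$ into a single tight path $Q$ missing only a set $W$ of $o(n)$ vertices, absorbs $W$ using the absorbing segment of $Q$, and closes $Q$ into a tight Hamilton cycle by one more application of the Connecting Lemma. Non-extremality is precisely what makes the counting in (i)--(iii) go through with $\d_2\approx n/2$: the configuration of Construction~\ref{cons:KK} is essentially the only obstruction to the required robust expansion of tight paths.

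\emph{Extremal case.} This is where the full strength $\d_2(H)\ge\lfloor n/2\rfloor$, with no slack, must be invoked, and it is the main obstacle. Fix the partition $V=A\cup B$; after a cleaning step one may assume $|A|,|B|=n/2+O(\a n)$ and that every vertex is ``typical'' with respect to the partition except for a small exceptional set $X_0$. In the idealized structure of Construction~\ref{cons:KK} a tight Hamilton cycle is impossible because, reading around the cycle, the quantity $a_i=|A\cap\{v_i,v_{i+1},v_{i+2}\}|$ changes by at most $1$ between consecutive windows and would have to avoid the value $1$ while averaging about $3/2$; the extra edges guaranteed by $\d_2(H)\ge\lfloor n/2\rfloor$ — in particular the existence, for suitable pairs, of a common neighbour producing $a_i=1$ — are exactly what lets the cycle cross this forbidden value a controlled number of times and absorb the exceptional vertices. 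I would therefore (a) route $X_0$ together with the needed ``crossing'' edges into a bounded number of short tight paths, using the exact codegree bound to certify that each required edge exists; (b) cover the typical vertices of $A$ and of $B$ by long tight paths that stay on one side of the forbidden value; and (c) splice everything into a single Hamilton cycle while making the sizes of $A$ and $B$ match up exactly. The final bookkeeping in (c), combined with verifying that every connecting and crossing edge is present under the tight bound $\lfloor n/2\rfloor$ — and checking the small-case parities that produce the floor — is the delicate heart of the proof and the step I expect to be the most laborious.
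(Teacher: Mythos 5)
The survey does not itself prove this theorem; it is cited directly from \cite{RRS11}, and Section~\ref{sec:hc} only sketches the general three-step absorbing framework (absorbing path, reservoir, path cover), remarking that the exact $k=3$ result required ``long and involved arguments'' with a difficult extremal case. Your outline---the lower bound via Construction~\ref{cons:KK}, a non-extremal case run through connecting, absorbing, and path-cover lemmas, and a separate extremal analysis invoking the exact bound $\lfloor n/2\rfloor$---matches that description and the standard shape of the \cite{RRS11} argument, so there is nothing methodological to dispute. The one caveat is that the extremal case, which you correctly flag as the delicate heart, remains here an itinerary rather than a proof: the cleaning step, the routing of exceptional vertices through ``crossing'' edges, and the final parity bookkeeping that produces the floor each require substantial verification under the tight hypothesis, and the survey itself singles out precisely this extremal analysis as the reason exact thresholds are hard when $k-l\mid k$.
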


Assume that $1\le l< k$ and $k-l$ divides $n$. Since every $(k-1)$-cycle of order $n$ contains an $l$-cycle on the same vertices, \eqref{eq:hk-1} implies that $h^{l}(k, n) \le \frac{n}2 + o(n)$.
On the other hand, it is not hard to see that $h^{l}(k, n) \ge \frac{n}{2} - k$ when $k-l$ divides $k$. In fact, when $k$ divides $n$, a Hamilton $l$-cycle contains a perfect matching thus Construction~\ref{cons:md} provides this bound; when $k$ does not divide $n$, this was proven by Markstr\"om and Ruci\'nski \cite[Proposition 2]{MaRu}.
Consequently,
\begin{equation}
\label{eq:RRSc}
h^{l}(k, n) = \frac{n}2 + o(n) \quad \text{if } k-l \mid k.
\end{equation}
Very recently Han and the author \cite{HZk2} determined the $d$-degree threshold $h^{k/2}_d(k, n)$ exactly for all even $k\ge 6$ and all $d\ge k/2$. The value of $h_d^{k/2}(k, n)$ turns out to be close (but not always equal) to $\delta(n, k, d)$ defined in Section~\ref{ss:1}.

When $k-l$ does not divide $k$, the threshold $h^l(k, n)$ is much smaller. K\"uhn and Osthus \cite{KuOs-hc} proved that $h^{1}(3, n) = n/4 + o(n)$. This was generalized to arbitrary $k$ and $l=1$ by Keevash, K\"uhn, Mycroft, and Osthus \cite{KKMO} and to arbitrary $k$ and arbitrary $l< k/2$ by H\`{a}n and Schacht \cite{HaSc}. Later K\"uhn, Mycroft, and Osthus \cite{KMO} showed that
\begin{equation}
\label{eq:KOc}
h^{l}(k, n) = \frac{n}{\lceil \frac{k}{k-l} \rceil (k-l)} +o(n) \quad \text{if } k-l \nmid k.
\end{equation}
The following simple construction supports the lower bound in \eqref{eq:KOc}.
\begin{construction}\label{cons:KMO}
Suppose $1\le l< k$, $k-l \nmid k$, and $k-l \mid n$. Let $t= n/(k-l)$ and $s= \lceil {k}/(k-l) \rceil$.
Let $H_0=(V, E)$ be an $n$-vertex $k$-graph in which $V$ is partitioned into sets $A$ and $B$ such that $|A| = \left\lceil t/s \right\rceil - 1$. The edge set $E$ consists of all the $k$-sets that intersect $A$.
\end{construction}
We have $\d_{k-1}(H_0)= |A|= \left\lceil {n}/ \left(\lceil \frac{k}{k-l} \rceil (k-l) \right) \right\rceil - 1$. If $H_0$ contains a Hamilton $l$-cycle $C$, then each vertex is contained in at most $s$ edges of $C$. Since $A$ is a vertex cover of $C$, $|C|\le |A| s< t$, a contradiction.

Recently Czygrinow and Molla \cite{CzMo} showed that $h^{1}(3, n)= \lceil n/4 \rceil$. Independently Han and the author \cite{HZ2} proved that $h^{l}(k, n)= \lceil n/(2k - 2l) \rceil$ for all $l< k/2$. It was conjectured \cite{HZ2} that
Construction~\ref{cons:KMO} is an extremal configuration for Hamilton $l$-cycles whenever $k-l$ does not divide $k$. On the other hand, since the extremal cases in \cite{HZk2, RRS11} require involved work, it seems harder to determine the exact value of $h^{l}(k, n)$ when $k-l$ divides $k$.

\begin{problem}
Determine $h^{l}(k, n)$ exactly for all $l\ge k/2$, in particular, prove that $h^{l}(k, n) = \left\lceil {n}/ \left(\lceil \frac{k}{k-l} \rceil (k-l) \right) \right\rceil$ when $k-l \nmid k$.
\end{problem}

Much less is known on the value of $h_d^{l}(k, n)$ when $d\le k-2$.
Bu\ss, H\`{a}n, and Schacht \cite{BHS} showed that $h_1^{1}(3, n) = (\frac{7}{16} + o(1)) \binom{n}{2}$. Recently Han and the author \cite{HZ1} improved this to an exact result.
\begin{theorem}\cite{HZ1}
There exists $n_0$ such that the following holds. If $H$ is a 3-graph $H$ on $n\ge n_0$ vertices with $n\in 2\mathbb{N}$ and $\delta_1(H)\ge \binom{n-1}2 - \binom{\lfloor\frac34 n\rfloor}2 + c$,  where $c=2$ if $n\in 4\mathbb N$ and $c=1$ otherwise. Then $H$ contains a loose Hamilton cycle.
\end{theorem}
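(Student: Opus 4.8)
The plan is to prove the exact minimum vertex degree threshold for loose Hamilton cycles in $3$-graphs by the standard absorbing scheme, separating a non-extremal case from an extremal case. First I would fix the extremal configuration: Construction~\ref{cons:KMO} with $k=3$, $l=1$ gives a $3$-graph $H_0$ with a vertex set partition $A\cup B$, $|A|=\lceil n/4\rceil-1$, and all edges meeting $A$; one checks $\delta_1(H_0)=\binom{n-1}{2}-\binom{\lfloor 3n/4\rfloor}{2}+c'$ for the appropriate small constant $c'$, which matches the claimed bound up to the $+c$, so the threshold is tight. For the upper bound, given $H$ with $\delta_1(H)\ge\binom{n-1}{2}-\binom{\lfloor 3n/4\rfloor}{2}+c$, I would first show that either $H$ is $\eps$-close to $H_0$ (the extremal case, handled separately) or $H$ satisfies a robust expansion/connectivity condition that powers the absorbing argument. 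The asymptotic result $h_1^1(3,n)=(\tfrac{7}{16}+o(1))\binom n2$ of Bu\ss, H\`an and Schacht already tells us the right order of magnitude, and indeed $1-(3/4)^2=7/16$, so the work is entirely in the lower-order terms.

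In the non-extremal case I would run the absorbing method in its now-standard form for loose cycles. Step one: build an \emph{absorbing path} — a short loose path $P_{\rm abs}$ with the property that for any small set $W$ of leftover vertices with $|W|$ of the correct residue, there is a loose path on $V(P_{\rm abs})\cup W$ with the same endpoints. This is produced by showing that each vertex (or pair of link-vertices) has many \emph{absorbers} — small configurations that can swallow it — which follows from the degree hypothesis once we are away from the extremal structure, and then selecting a random subfamily and cleaning it up. Step two: a \emph{reservoir/connecting lemma} — any two disjoint small link-configurations can be joined by a short loose path through a pre-reserved set, again using that codegree-type counts are bounded below when $H$ is not close to $H_0$. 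Step three: after absorbing path and reservoir are set aside, greedily (or via an almost-perfect-matching / fractional argument à la Section~\ref{ss:3}) cover all but $o(n)$ of the remaining vertices by a long loose path, close it up through the reservoir into a cycle missing only a tiny leftover set $W$, and finally invoke the absorbing property of $P_{\rm abs}$ to swallow $W$ and obtain the spanning loose Hamilton cycle. Getting the additive constant $c$ exactly right forces a careful analysis of the divisibility obstruction: since a loose Hamilton cycle on $n$ vertices uses $n/2$ edges and has a natural $2$-coloring-type parity constraint, the value of $c$ (being $2$ when $4\mid n$ and $1$ otherwise) mirrors the parity discrepancy in Construction~\ref{cons:md}-style barriers, and one must verify that at exactly one above the extremal degree the barrier breaks.

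The extremal case is where most of the genuine effort lies. Here $H$ is $\eps$-close to $H_0$, so there is a partition $V=A'\cup B'$ with $|A'|\approx n/4$ such that almost all edges meet $A'$. The task is to build a loose Hamilton cycle directly. The natural strategy: a loose Hamilton cycle on $n$ vertices decomposes into $n/2$ edges, each of which, in the ideal $H_0$, uses exactly one vertex of $A$ as a link vertex and two vertices of $B$; so one wants to pair up the $B$-vertices into $n/4$ "link pairs" and thread the $A$-vertices through as the shared vertices. One first corrects the few vertices of $A'$ of low degree and the few "bad" edges inside $B'$, possibly by using a handful of atypical vertices to patch exceptional spots, then sets up a near-perfect matching of $B'$ into pairs each of which extends to an edge with many $A'$-partners, and finally builds the cycle. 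The delicate points are: handling the $O(1)$ exceptional vertices (vertices of $A'$ with too few neighbors, or of $B'$ with too many), which is exactly what the additive constant $c$ controls; and ensuring the $A'$-threading can be completed — this is essentially a bipartite-style perfect matching / Hall-type argument between $A'$-vertices and $B'$-link-pairs.

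The main obstacle, as usual for exact Dirac-type results, is the extremal case and in particular pinning down the precise constant $c$: one must show both that $\delta_1(H_0)$ (or a slight perturbation of it) genuinely blocks a loose Hamilton cycle, and that \emph{any} $H$ with degree one larger, while still $\eps$-close to $H_0$, does contain one — which requires a tight accounting of how many and which exceptional vertices a near-extremal $H$ can have, and a robust patching procedure that uses them up without creating new obstructions. The non-extremal absorbing part, by contrast, I expect to be long but routine given \eqref{eq:KOc} and the machinery already cited in the excerpt.
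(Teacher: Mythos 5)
The survey does not actually prove this theorem --- it cites it from \cite{HZ1} --- so there is no in-paper proof to compare against in detail. Your high-level plan (absorbing path, reservoir, cover, with an extremal/non-extremal dichotomy) does match the three-step scheme the paper sketches at the end of Section~\ref{sec:hc}, and your asymptotic sanity check $1-(3/4)^2 = 7/16$ is correct. However, there is a concrete error in your picture of the extremal case that would derail the argument as you describe it.

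You write that in the near-extremal structure a loose Hamilton cycle ``uses exactly one vertex of $A$ as a link vertex and two vertices of $B$'' per edge and that one should ``pair up the $B$-vertices into $n/4$ link pairs and thread the $A$-vertices through as the shared vertices.'' A loose Hamilton cycle on $n$ vertices (for $k=3$, $\ell=1$) has $n/2$ link vertices (shared between consecutive edges) and $n/2$ non-link vertices. With $|A|\approx n/4$, the $A$-vertices cannot supply all the link vertices; in fact the only way to have every edge meet $A$ with $|A|=n/4$ is for the link vertices to alternate between $A$ and $B$ around the cycle, with $n/4$ link vertices in each of $A$ and $B$ and the remaining $n/2$ $B$-vertices serving as middles. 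So each edge consists of one $A$-link, one $B$-link, and one $B$-middle --- not one $A$-vertex and a $B$-pair of middles. Correspondingly, your ``$n/4$ link pairs'' bookkeeping is off (there are $n/2$ edges and hence $n/2$ $B$-pairs if you insist each edge have a $B$-pair, which double-counts $3n/4$ vertices into $n$ slots). The Hall-type matching argument you propose between $A$-vertices and $B$-pairs therefore does not encode the correct structure; the extremal case needs a construction of an alternating $A$/$B$ link pattern, which is a different (and somewhat more delicate) combinatorial object to build and patch.

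A secondary, smaller gap: you assert that $H_0$ from Construction~\ref{cons:KMO} has $\delta_1(H_0)=\binom{n-1}2-\binom{\lfloor 3n/4\rfloor}2+c'$ ``which matches the claimed bound up to the $+c$.'' Computing directly, $\delta_1(H_0)=\binom{n-1}2-\binom{\lfloor 3n/4\rfloor}2$ in both congruence classes of $n\bmod 4$, so $H_0$ alone certifies $h_1^1(3,n)\ge\delta_1(H_0)+1$, which gives the claimed threshold only when $n\equiv 2\pmod 4$ ($c=1$). When $4\mid n$ the theorem has $c=2$, so a second, parity-type construction is needed to certify the extra $+1$; you gesture at ``Construction~\ref{cons:md}-style barriers'' but never produce it, and this is exactly the sort of point where exact Dirac-type results live or die.
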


It is conjectured in \cite{BHS} that $h_1^{1}(k, n) = \delta_1(H_0) + o(n^{k-1})$ for $H_0$ defined in Construction~\ref{cons:KMO}. 
\begin{problem}
Determine, asymptotically or exactly, the other values of $h_d^{l}(k, n)$, in particular, prove or disprove that $h_1^1(k, n) = \delta_1(H_0) + o(n^{k-1})$ for all $k\ge 4$.
\end{problem}
It was conjectured in \cite{RR-survey} that $ h_d^{k-1}(k, n) = m_d(k, n) + o(n^{k-d})$ for all $d<k$ and the aforementioned result \cite{RRS09} confirmed this for $d=k-1$.
However, Han and the author \cite{HaZh15} recently disproved this conjecture.
Nevertheless, it might be true that $h_1^{2}(3, n) \approx m_1(3, n) = (\frac59 + o(1))\binom{n}{2}$.
This problem seems (much) harder than the corresponding matching problem. The best known bound $h_1^{2}(3, n) \le \frac13 (5 - \sqrt{5} + o(1)) \binom{n}{2}$ was given by R\"odl and Ruci\'nski \cite{RoRu14} recently. For arbitrary $d<k$, Glebov, Person, and Weps \cite{GPW} proved that $h_d^{k-1}(k, n) \le (1- c_{k, d}) \binom{n-d}{k-d}$ for some small $c_{k, d}>0$.

\medskip
As with the matching and packing problems, the absorbing method is the main tool of finding a Hamilton $l$-cycle in $k$-graphs. Since \cite[Section 2.2]{RR-survey} gave a detailed sketch on this approach (in the case of $l=d= k-1$), we only highlight the main ideas. In a $k$-graph, an \emph{$l$-path} consists of vertices $v_0, v_1, \dots, v_{t-1}$ for some $t= (k-l)q + l$ and edges $E_0, E_1, \dots, E_{q-1}$, where $E_i= v_{(k-l)i} \dots v_{(k-l)i+k-1}$.  We call $v_0 v_1 \dots v_{l-1}$ and $v_{t-l} \dots v_{t-1}$ two \emph{ends} of the path. Given an $l$-path $P$ and a vertex set $S$ such that $V(P)\cap S= \emptyset$, we say that $P$ \emph{absorbs} $S$ if there is an $l$-path on $V(P)\cup S$ with the same ends of $P$. To find a Hamilton $l$-cycle in a $k$-graph $H$, we proceed in the following steps.
\begin{description}
\item[Step 1] Find an \emph{absorbing $l$-path} $P$ with $|P|\ll n$; namely, $P$ can absorb any vertex set $W\subseteq V(H)\setminus V(P)$ such that $|W|\ll |P|$ and $|W|\in (k-l)\mathbb{N}$. Denote the two ends of $P$ by $L_1$ and $L_2$ and let $V' = (V\setminus V(P))\cup L_1\cup L_2$.
\item[Step 2] Find a \emph{reservoir set} $R\subset V'$ of size $|R|\ll |P|$ such that any two $l$-subsets of $V'$ can be connected to an $l$-path via many $s$-subsets of $R$ for some constant $s$ (\ie, independent of $n$). Let $V''= V\setminus (V(P)\cup R)$.
\item[Step 3] Cover all but $o(n)$ vertices of $V''$ with constant many vertex-disjoint $l$-paths.
\end{description}
Once these three steps are done, we connect all the $l$-paths that we have found by using the vertices of $R$ and finally absorb the leftover vertices in Step 3 and the remaining vertices in $R$ with $P$.
If $\d_d(H)\ge m_d(k, n) + o(n^{k-d})$, then Step 3 can be easily done by first applying the weak Regularity Lemma and then finding an almost perfect matching in the reduced $k$-graph. (Thus the difficulty of proving $h^2_1(3, n)\le \frac59\binom n2 + o(n^2)$ resides on the first two steps.) On the other hand, when $l< k-1$, a smaller $\d_d(H)$ may suffice for Step 3. For example, when $2l< k$ and $\d_{k-1}(H)\ge n/(2k - 2l)$, the authors of \cite{HZ2} accomplished Step 3 by finding an almost $Y_{k, 2l}$-factor in the reduced $k$-graph, where $Y_{k, 2l}$ consists of two $k$-sets that share exactly $2l$ vertices. This is possible because one can convert a copy of $Y_{k, 2l}$ in the reduced $k$-graph to an almost spanning $l$-path in the original $k$-graph, due to the fact that, when $2l< k$, every (long) $l$-path is a $k$-partite $k$-graph whose first $2l$ parts are of size about $m$ and the remaining parts are of size about $2m$ for some integer $m$.

\section*{Acknowledgements}
The author would like to thank Jie Han for valuable discussion when preparing this manuscript. He also thanks Albert Bush, Jie Han, Allan Lo, Richard Mycroft, and Andrew Treglown for their comments that improved the presentation of the manuscript.
\bibliographystyle{abbrv}
\bibliography{May2015b}

\end{document}